\documentclass[11pt]{amsart}
\usepackage[usenames,dvipsnames]{color}
 \usepackage{amsthm}
\usepackage{amsmath,amssymb}
\usepackage{geometry}
\usepackage{accents}
\usepackage{url} 
\usepackage{hyperref}
\newcommand\cyr
{
\renewcommand\rmdefault{wncyr}
\renewcommand\sfdefault{wncyss}
\renewcommand\encodingdefault{OT2}
\normalfont
\selectfont
}
\DeclareTextFontCommand{\textcyr}{\cyr}

\DeclareMathSymbol{\widehatsym}{\mathord}{largesymbols}{"62}
\newcommand\lowerwidehatsym{%
  \text{\smash{\raisebox{-1.3ex}{%
    $\widehatsym$}}}}
\newcommand\fixwidehat[1]{%
  \mathchoice
    {\accentset{\displaystyle\lowerwidehatsym}{#1}}
    {\accentset{\textstyle\lowerwidehatsym}{#1}}
    {\accentset{\scriptstyle\lowerwidehatsym}{#1}}
    {\accentset{\scriptscriptstyle\lowerwidehatsym}{#1}}
}

\begin{document}

\newtheorem{theorem}{Theorem}
\newtheorem{proposition}[theorem]{Proposition}
\newtheorem{corollary}[theorem]{Corollary}
\newtheorem{lemma}[theorem]{Lemma}
\newtheorem{definition}[theorem]{Definition}
\newtheorem{remark}[theorem]{Remark}
\newtheorem{conjecture}[theorem]{Conjecture}

\newcommand\Z{{\mathbb Z}}
\newcommand\K{{\mathbb K}}
\newcommand\R{{\mathbb R}}
\newcommand\Aff{{\mathbb A}}
\newcommand\Sp{{\mathbb S}}
\newcommand\N{{\mathbb N}}
\newcommand\A{{\mathcal A}}
\newcommand\B{{\mathcal B}}
\newcommand\D{{\mathcal D}}
\newcommand\C{{\mathbb C}}
\newcommand\G{{\mathcal G}}
\renewcommand\O{{\mathcal O}}
\newcommand\cQ{{\mathit{\Delta}}}
\newcommand\cJ{{\mathcal J}}
\newcommand\AV{{\mathcal {AV}}}
\newcommand\hAV{\fixwidehat{\mathcal {AV}}}
\newcommand\hav{\fixwidehat{{AV}}}
\newcommand\wo{\widehat{\otimes}}
\newcommand\F{{\mathcal F}}
\newcommand\V{{\mathcal V}}
\newcommand\cV{{\mathcal V}}
\newcommand\cU{{\mathcal U}}
\newcommand\LL{{\mathcal L}}
\newcommand\J{{\mathcal J}}
\newcommand\hLL{\fixwidehat{\mathcal L}}
\newcommand\hJ{\fixwidehat{J}}
\newcommand\JJ{\fixwidehat{\mathcal{J}}}
\newcommand\dd[1]{\frac{\partial}{\partial #1}}
\newcommand\Der{\textup{Der\,}}
\newcommand\rep{\textup{rep}}
\renewcommand\mod{\textup{mod\,}}
\newcommand\Stab{\textup{Stab}}
\newcommand\Spec{\textup{Spec}}
\newcommand\End{\textup{End}}
\newcommand\Span{\textup{Span}}
\newcommand\ideal{\triangleleft}
\newcommand\gl{\mathfrak{gl}}
\newcommand\m{\mathfrak{m}}
\newcommand\id{\textup{id}}
\newcommand\Q{\Delta}

\title
[A Universal Sheaf of Algebras]
{A universal sheaf of algebras governing representations of vector fields on quasi-projective varieties}
\author{Yuly Billig}
\address{School of Mathematics and Statistics, Carleton University, Ottawa, Canada}
\email{billig@math.carleton.ca}
\author{Colin Ingalls}
\address{School of Mathematics and Statistics, Carleton University, Ottawa, Canada}
\email{cingalls@math.carleton.ca}
\let\thefootnote\relax\footnotetext{{\it 2010 Mathematics Subject Classification.}
Primary 17B66; Secondary 14F10.}

\begin{abstract}
We construct a topologically quasi-coherent sheaf of associative algebras which controls a category of finitely generated $AV$-modules over a smooth quasi-projective variety. We establish a local structure theorem, proving that in \'etale charts these associative algebras decompose into a tensor product of the algebra of differential operators and the universal enveloping algebra of the Lie algebra of power series vector fields vanishing at the origin.
\end{abstract}

\maketitle

\section{Introduction}

The motivation for this paper is twofold -- to generalize the notion of $D$-modules and to better understand representation theory of Lie algebras of vector fields.

Conceptually, the Lie algebra $V$ of vector fields on a variety $X$ corresponds to the group of diffeomorphisms of $X$. For this reason vector fields act on many geometric objects associated with $X$. Such actions rarely extend to the algebra $D$ of differential operators. For example, the adjoint action of $V$ on the sections of the tangent bundle does not extend to a $D$-module.

In order to obtain a meaningful representation theory of a Lie algebra, where we can hope to establish structural results, we need to impose constraints on the class of modules we consider. Such a constraint may be a certain finiteness condition, i.e.~considering modules that are finite-dimensional or having finite-dimensional weight spaces.

By a theorem of David Jordan \cite{J1}, \cite{J2} (see also \cite{BF}), Lie algebras of vector fields on smooth irreducible affine varieties are simple. In general, such Lie algebras have
no non-zero semisimple or nilpotent elements \cite{BF}, thus the classical machinery of roots and weights is not applicable for these simple Lie algebras. Instead, we consider the category of $AV$-modules, where we impose a supplementary condition that in addition to the action of the Lie algebra $V$ of vector fields, modules $M$ in this category admit an action of the commutative algebra $A$ of functions on $X$, with the two actions compatible via the Leibniz rule:
$$\eta \cdot f \cdot m = f \cdot \eta \cdot m + \eta(f) \cdot m, \ \ \text{for \ } 
\eta\in V, f\in A, m\in M.$$
$D$-modules satisfy an additional axiom $f \cdot \eta \cdot m = (f \eta) \cdot m$, which we do not impose in the category of $AV$-modules.

The notion of $AV$-modules was instrumental in the proof of the classification theorem for simple $V$-modules with finite-dimensional weight spaces over a torus \cite{BF0} or an affine space \cite{GS}, \cite{XL}, where it was shown that cuspidal $V$-modules possess covers in the category of cuspidal $AV$-modules.

From the geometric perspective, spaces of sections of tensor fields on $X$ are $AV$-modules, very few of which happen to be $D$-modules.

When $X$ is an affine variety, $AV$-modules may be described as representations of an associative algebra $AV$, which is the smash product of a Hopf algebra $\cU(V)$, the universal enveloping algebra of $V$, with its module $A$:
$$AV = A \# \cU(V).$$
In our previous work \cite{BIN} we investigated the structure of $AV$-modules on affine space $\Aff^N$. The key to understanding these modules was the following structure theorem for the algebra $AV$ (see also \cite{XL}):
$$AV(\Aff^N) \cong D(\Aff^N) \otimes_K \cU(\LL_+).$$
Here $\LL_+$ is the Lie algebra of vector fields on $\Aff^N$ vanishing at the origin and $D(\Aff^N)$ is the algebra of differential operators on the affine space.  

This result suggests that locally $AV$-modules are given by two ingredients -- a $D$-module, and a compatible action of $\LL_+$. As a result, there are visible parallels between theories of $AV$-modules and $D$-modules. If we look at the $AV$-modules that have been studied so far in the literature \cite{BFN}, we see that the gauge modules generalize $D$-modules given by vector bundles with flat connections, while Rudakov modules generalize $D$-modules of delta functions. The $\LL_+$ ingredient gives an additional flavour to $AV$-modules, making this category richer than the category of $D$-modules.




If we replace $\Aff^N$ with an arbitrary smooth irreducible affine variety, the above decomposition will not hold in general. In the present paper we show that local analogues of this tensor product decomposition still hold if we take appropriate completions. Let us describe our main results.
The associative algebra $AV = A \# \cU(V)$ has a Lie subalgebra $A \# V$. First we construct its completion: the Lie algebra $\hJ$ of jets of vector fields. For this we consider the kernel $\Delta$ of the multiplication map $A \otimes_K A \rightarrow A$ and set 
$$\hJ = \varprojlim\limits_n A \# V / (\Delta^n \otimes_A V).$$
Then we define the completion $\hav$ as
$$\hav = A \# \cU (\hJ) / \left< f \cdot s - fs \, |\, f \in A, s \in \hJ \right>,$$
where $f \cdot s$ is the product in  $A \# \cU (\hJ)$ and $fs$ is the product in $\hJ$.

A variety $X$ has an atlas of \'etale charts $\{ U_i \}$ (see Section 2 for details). Each \'etale chart comes with uniformizing parameters $\{ x_1, \ldots, x_N \}$ which trivialize the Lie algebra of vector fields: $$V(U) = \mathop\bigoplus\limits_{i=1}^N A(U) \dd{x_i}.$$

Locally in an \'etale chart we establish structure theorems for the Lie algebra of jets $\hJ$ and for the associative algebra $\hav$.

\begin{theorem}
Let $U \subset X$ be an \'etale chart. Then
$$\hJ (U) \cong V(U) \ltimes (A(U) \wo \LL_+),$$
$$\hav(U) \cong D(U) \mathop\otimes\limits_A \cU_A(A(U) \wo \LL_+).$$
Here $\LL_+$ is the Lie algebra of vector fields on $\Aff^N$ vanishing at the origin and $A(U) \wo \LL_+$ is a completion of the tensor product.
\end{theorem}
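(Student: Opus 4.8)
The plan is to reduce both isomorphisms to an explicit coordinate model of $\hJ(U)$ coming from the formal neighbourhood of the diagonal, and then to realize $D(U)$ and $\cU(\hLL_+)$ as commuting subalgebras of $\hav(U)$ whose product map I show is bijective. First I would make the $A$-module structure of $\hJ(U)$ explicit. Since $\{x_1,\dots,x_N\}$ are étale coordinates, the diagonal ideal $\Delta\subset A\otimes_K A$ is generated by $t_i:=1\otimes x_i-x_i\otimes 1$ and $\Delta/\Delta^2\cong\Omega^1_{A/K}$ is free on the classes of the $t_i$; hence the formal neighbourhood of the diagonal is a trivial formal disc, $\varprojlim_n (A\otimes_K A)/\Delta^n\cong A[[t_1,\dots,t_N]]$ (this is the smoothness input provided by the étale chart, cf.\ Section 2). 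As $V(U)=\bigoplus_i A\,\partial_{x_i}$ is free and $\hJ(U)=\varprojlim_n\big((A\otimes_K A)/\Delta^n\big)\otimes_A V$, this gives $\hJ(U)\cong\bigoplus_i A[[t]]\,\partial_{x_i}$ as an $A$-module, under which $f\cdot(b\,\partial_{x_i})$ corresponds to $f(x)\,b(x+t)\,\partial_{x_i}$.

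Next I would extract the semidirect decomposition. The ideals $\Delta^n\otimes_A V$ are Lie ideals of $A\# V$, so the $n=1$ quotient yields a Lie algebra projection $\pi\colon\hJ(U)\to (A\otimes_K A/\Delta)\otimes_A V=V(U)$, namely evaluation at $t=0$, with kernel $\ker\pi=\bigoplus_i\{p\in A[[t]]:p(x,0)=0\}\,\partial_{x_i}$. I would split $\pi$ by the $t$-independent lift $\sigma\big(\sum_i c_i\partial_{x_i}\big):=\sum_i c_i\cdot\partial_{x_i}$ and check, using the smash bracket $[f\eta,h\xi]=fh[\eta,\xi]+f\,\eta(h)\,\xi-h\,\xi(f)\,\eta$, that $\sigma$ is a Lie homomorphism. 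The decisive simplification is that $\ker\pi$ acts on $A$ through $\pi$ and hence annihilates $A$; this kills the Leibniz cross-terms, so the bracket on $\ker\pi$ is $A$-bilinear. On the generators $e_i^{(k)}=1\cdot(x_k\partial_{x_i})-x_k\cdot\partial_{x_i}\mapsto t_k\partial_{x_i}$ a short computation gives $[e_i^{(k)},e_j^{(l)}]=\delta_{il}e_j^{(k)}-\delta_{jk}e_i^{(l)}$, matching the bracket of $t_k\partial_{t_i}$ and $t_l\partial_{t_j}$ under $\partial_{x_i}\leftrightarrow\partial_{t_i}$. Hence $\ker\pi\cong A\otimes\hLL_+$ with the current-algebra bracket and $\hJ(U)=V(U)\ltimes(A\otimes\hLL_+)$.

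For $\hav(U)$ I would exhibit the two tensor factors inside it. Imposing the defining relation $f\cdot s=fs$ for $s=\eta\in\sigma(V)$ is exactly the extra $D$-module axiom $f\cdot\eta=f\eta$ from the introduction, so the inclusions of $A$ and of $\sigma(V)\subset\hJ(U)$ induce an algebra map $D(U)=A\#\cU(V)/\langle f\cdot\eta-f\eta\rangle\to\hav(U)$. The constant subalgebra $\hLL_+=1\otimes\hLL_+\subset\ker\pi$ induces $\cU(\hLL_+)\to\hav(U)$, and its image commutes with that of $D(U)$: first, $\hLL_+$ annihilates $A$; second, I would verify that the semidirect action is $\sigma(\eta)\cdot(f\otimes\ell)=\eta(f)\otimes\ell$, so $1\otimes\hLL_+$ is precisely the $\sigma(V)$-centralizer in $\ker\pi$ and $[\hLL_+,\sigma(V)]=0$. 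Thus multiplication defines an algebra homomorphism $\mu\colon D(U)\otimes\cU(\hLL_+)\to\hav(U)$, which is surjective because $\hav(U)$ is generated by $A$ and $\hJ(U)$ and the relation rewrites each $f\otimes\ell\in A\otimes\hLL_+$ as $f\cdot\ell\in D(U)\cdot\cU(\hLL_+)$.

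The step I expect to be the main obstacle is the injectivity of $\mu$, i.e.\ ruling out any collapse forced by the relation $f\cdot s=fs$. Here I would argue by PBW and filtration: the semidirect structure gives $\cU(\hJ(U))\cong\cU(\sigma V)\otimes\cU(A\otimes\hLL_+)$ as vector spaces, and I would filter $A\#\cU(\hJ(U))$ by enveloping-algebra degree in $\hJ$ (equivalently, $D(U)$ by order of differential operator and $\cU(\hLL_+)$ by enveloping degree) so that the relation is filtered; on the associated graded both sides become free symmetric-type modules and $\mathrm{gr}\,\mu$ is manifestly bijective, whence $\mu$ is an isomorphism, which also shows $D(U)$ and $\cU(\hLL_+)$ embed and meet trivially. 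The delicate bookkeeping is arranging compatible filtrations on $A\#\cU(\hJ(U))$ and on $D(U)\otimes\cU(\hLL_+)$ so that passing to the quotient by $f\cdot s=fs$ loses no degree; this is where the freeness of $V(U)$ over $A(U)$ and the completeness of $A[[t]]$ must be used carefully.
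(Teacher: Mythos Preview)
Your strategy matches the paper's: identify $\hJ(U)$ with $A[[t]]\otimes_A V$ via the jet map, split the anchor by $\sigma(V)=\sum_i A\#\partial_{x_i}$, and recognise the kernel as a current algebra. The paper executes this by writing down explicit mutually inverse $A$-linear maps $\varphi,\psi$ (formulas (\ref{phiJ})--(\ref{psiLp})) and verifying directly, via a Taylor-series computation, that each is a Lie homomorphism; your structural route through $\pi$ and $\sigma$ is equivalent in outline.

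There is, however, an incomplete verification in your plan. You correctly observe that the bracket on $\ker\pi$ is $A$-bilinear, but you then check it only on the elements $e_i^{(k)}\leftrightarrow t_k\partial_{t_i}$. These span $\LL_0\cong\gl_N$ alone; their mutual brackets remain in degree zero, so they generate neither $\ker\pi$ as an $A$-module nor as a Lie algebra. To conclude $\ker\pi\cong A\otimes\hLL_+$ you must determine $[\delta(x)^m\#\partial_{x_i},\,\delta(x)^n\#\partial_{x_j}]$ for arbitrary $m,n$, and likewise the full semidirect action $[\sigma(\eta),\delta(x)^m\#\partial_{x_i}]$, not merely the linear case. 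This is exactly what the paper's direct check that $\varphi$ is a Lie homomorphism accomplishes (with $\psi$ handled by reference to \cite{BIN}); your plan is sound, but the ``short computation'' you sketch does not yet pin down the bracket on all of $\ker\pi$.

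For the second isomorphism your PBW/filtration argument is viable, but the paper bypasses it: once $\hJ\cong V\ltimes(A\otimes\hLL_+)$ is established, it invokes the vector-space decomposition $D(A,G_1\ltimes G_2)\cong D(A,G_1)\otimes_A D(A,G_2)$ for strong Rinehart enveloping algebras of a semidirect product, then identifies $D(A,V)=D$ and $D(A,A\otimes\hLL_+)=A\otimes\cU(\hLL_+)$, whence $\hav\cong D\otimes_A(A\otimes\cU(\hLL_+))=D\otimes\cU(\hLL_+)$. Your argument is essentially a hands-on proof of this decomposition in the case at hand, so it is correct but longer; the paper's route avoids the filtration bookkeeping you yourself flag as the delicate step.
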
 

When $X$ is a quasi-projective variety, we need to replace global objects like the Lie algebra of vector fields with the corresponding sheaves. If we sheafify the algebras $AV$, the resulting sheaf will fail, in general, to be quasi-coherent. An additional benefit of passing to the completion is that the sheaf $\hAV$ of completed algebras becomes topologically quasi-coherent (Theorem \ref{quasicoherent}). We also present an explicit localization formula (Proposition \ref{localizationformula}).

Motivated by the completion construction for algebras $AV$, we proposed to Emile Bouaziz and Henrique Rocha a conjecture about finitely generated $AV$-modules, which they successfully proved:
\begin{theorem} \label{thm:BR22} (\cite{BR})
Let $X$ be a smooth irreducible affine algebraic variety. Let $M$ be an $AV$-module on $X$ which is finitely generated over $A$. Then there exists $k\in \N$ depending on $rk_A(M)$, such that 
$\Delta^k \otimes_A V$ annihilates $M$.
\end{theorem}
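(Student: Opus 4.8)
The plan is to reduce the statement to the generic point of $X$ and there apply the representation theory of the graded Lie algebra of formal vector fields. Set $F=\operatorname{Frac}(A)$ and $r=rk_A(M)=\dim_F(M\otimes_A F)$, and write $\rho(\eta)$ for the operator $m\mapsto\eta\cdot m$. I would first check that $M$ is torsion-free: if $a\cdot m=0$, the Leibniz rule gives $a^2(\eta\cdot m)=0$, so the torsion submodule $M_{\mathrm{tors}}$ is an $AV$-submodule; differentiating the relation $a\cdot M_{\mathrm{tors}}=0$ shows that its annihilator is a $V$-stable ideal of $A$, and a smooth irreducible affine variety has no nonzero proper $V$-stable ideal (its coordinate ring is a simple $D$-module). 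Hence $M_{\mathrm{tors}}=0$ and $\End_A(M)\hookrightarrow\End_F(M\otimes_A F)=\gl_r(F)$, so it suffices to prove that $\Delta^k\otimes_A V$ acts by zero after tensoring with $F$. Since the claim is local I fix an \'etale chart with coordinates $x_1,\dots,x_n$, $n=\dim X$, as in Theorem 1.

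Next I would package the action. A one-line Leibniz computation shows that for $f\in A$ and $\eta\in V$ the operator $f\rho(\eta)-\rho(f\eta)$ is $A$-linear; this is exactly how a generator of $\Delta\otimes_A V$ acts on $M$, and since $\Delta^k\subseteq\Delta$ and $V$ is locally free, every element of $\Delta^k\otimes_A V$ acts by an $A$-linear endomorphism. By the structure theory underlying Theorem 1 these defects organize into a Lie algebra homomorphism $\pi\colon\mathcal L_+\to\gl_r(F)$, where $\mathcal L_+=\bigoplus_{d\ge0}\mathfrak g_d$ is the Lie algebra of polynomial vector fields vanishing at the origin, graded so that $\mathfrak g_0=\gl_n$ and $\mathfrak g_d$ is the space of fields of order $d+1$, i.e.\ the $\operatorname{ad}(E)$-eigenspace of eigenvalue $d$ for the Euler field $E=\sum_i x_i\,\partial_{x_i}$. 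Under this dictionary $\Delta^k\otimes_A V$ corresponds to $\bigoplus_{d\ge k-1}\mathfrak g_d$, so the theorem is equivalent to the vanishing $\pi(\mathfrak g_d)=0$ for all large $d$, together with an explicit threshold.

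The heart of the proof, and the point where the rank enters, is a nilpotency observation. Let $h=\pi(E)\in\gl_r(F)$; since $\pi$ is a homomorphism, $[h,\pi(\xi)]=d\,\pi(\xi)$ for $\xi\in\mathfrak g_d$. For $d\ge1$ the scalar $d$ is a nonzero element of $F$, and applying the trace to $\operatorname{ad}(h)(\pi(\xi)^m)=md\,\pi(\xi)^m$ gives $\operatorname{tr}(\pi(\xi)^m)=0$ for all $m\ge1$; in characteristic zero this forces $\pi(\xi)$ to be nilpotent. Thus $\pi$ sends the positive part $\mathfrak n=\bigoplus_{d\ge1}\mathfrak g_d$ into nilpotent matrices, so by Engel's theorem $\pi(\mathfrak n)$ is simultaneously strictly triangularizable in $F^r$ and $\pi(\mathfrak n)^r=0$ as an associative product. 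Since $\mathfrak n$ is generated in bounded degree---by $\mathfrak g_1$ when $n\ge2$, and by $\mathfrak g_1\oplus\mathfrak g_2$ when $n=1$---each $\pi(\mathfrak g_d)$ lies in the span of associative products of at least $d/D_0$ elements of $\pi(\mathfrak n)$, with $D_0=D_0(\dim X)$; hence $\pi(\mathfrak g_d)=0$ as soon as $d\ge D_0\,r$. Taking $k=D_0\,r+1$ (so $k=rk_A(M)+1$ when $\dim X\ge2$) yields a bound depending only on $rk_A(M)$ and $\dim X$.

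The main obstacle I anticipate is the second step: showing that the higher defects genuinely assemble into a representation of the Lie algebra $\mathcal L_+$, and not merely into an unstructured $A$-linear family. This is precisely the content of the local decomposition $\hav(U)\cong D(U)\otimes\cU(\hLL_+)$ of Theorem 1, which exhibits $\cU(\hLL_+)$ as a tensor factor commuting with $A\subset D(U)$; granting it, the nilpotency trick and Engel's theorem are formal. A secondary point requiring care is the reduction to the generic point, which rests on the torsion-freeness established at the outset.
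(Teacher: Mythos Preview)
The paper does not contain its own proof of this theorem: it is stated with attribution to \cite{BR} (Bouaziz--Rocha), who proved it after being given the statement as a conjecture by the present authors. There is therefore no in-paper proof to compare against.

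That said, your strategy is essentially the one used in \cite{BR}, and the nilpotency argument via the Euler eigenvalue together with Engel's theorem is exactly the right mechanism. However, there is a genuine gap in the step you call the ``dictionary''. You assert that $\Delta^k\otimes_A V$ corresponds to $\bigoplus_{d\ge k-1}\mathfrak g_d$, so that the theorem is \emph{equivalent} to $\pi(\mathfrak g_d)=0$ for large $d$. One direction is immediate, but the converse is not: the map $\psi$ from Theorem~\ref{isojet} sends $X^m\partial_{X_i}$ to $(-1)^m\delta(x)^m\partial_{x_i}$, and the $A$-span of these elements equals $\Delta^k\otimes_A V$ only \emph{after completion} (or modulo $\Delta^{k+1}$). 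Over an \'etale chart that is not $\Aff^N$, the ideal $\Delta^k$ is generated over $A$ by all products $\delta(f_1)\cdots\delta(f_k)$ with $f_j\in A$, not just by the monomials $\delta(x)^m$. So from $\pi(\mathfrak g_d)=0$ for $d\ge k_0$ you only know that each $\delta(x)^m\partial_{x_i}$ with $|m|>k_0$ acts by zero; you still owe an argument that an arbitrary $\delta(f_1)\cdots\delta(f_k)\partial_{x_i}$ acts by zero. Invoking the decomposition $\hav(U)\cong D(U)\otimes\cU(\hLL_+)$ here is circular: that isomorphism lives in the completion, and knowing that the $AV$-action on $M$ factors through $\hav$ is precisely the statement to be proved.

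The obstacle you flagged is actually not the problem: the Lie-algebra map $\pi\colon\mathcal L_+\to\End_A(M)$ exists without any completion, since the formula for $\psi$ on $\mathcal L_+$ lands in the uncompleted $J=A\#V$ and is already a Lie-algebra homomorphism (this is the content of the cited Lemma~3.4 of \cite{BIN}). The real work, which \cite{BR} carries out, is the passage from ``the monomial jets $\delta(x)^m\partial_{x_i}$ vanish on $M$ for $|m|\gg 0$'' to ``$\Delta^k\otimes_A V$ vanishes on $M$''. One way to close this is to observe that $\Delta^k/\Delta^{k+1}$ is free over $A$ on the $\delta(x)^m$ with $|m|=k$, so the image of $\Delta^k\otimes_A V$ in $\End_A(M)$ equals the image of $\Delta^{k+1}\otimes_A V$ once $k\ge k_0$; one must then argue that this stabilized image is zero, e.g.\ via a Nakayama/Artin--Rees style argument using that $\End_A(M)$ is a finite $A$-module. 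Without such a step your outline is incomplete.
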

This means that on each $AV$-module $M$ which is finitely generated as an $A$-module, the action of the completion trivializes, with the tails of a high enough order acting trivially on $M$.

If we want to study the sheafs of $AV$-modules, completion of algebra $AV$ becomes inevitable.
Our Theorem \ref{bundle} shows that when we perform a change of coordinates, an element of 
Lie algebra $\LL_+$ transforms into a power series from $A \wo \LL_+$. At the same time, we are still capturing the essential part of the representation theory, as the most important $AV$-modules, 
guage modules and Rudakov modules admit the action of $\hAV$ since each of them is annihilated by $\Delta^k \otimes_A V$ for some $k$. Thus $\hAV$ is the universal sheaf of algebras governing the theory of $AV$-modules over a quasiprojective variety.


\

The structure of the paper is as follows. In Section 2 we discuss \'etale charts on $X$. In Section 3 we review the notion of a Rinehart pair and the construction of its weak and strong enveloping algebras.
In Section 4 we develop the theory of jets of functions, which we then apply in Section 5 to construct the jets of vector fields and the completion of the associative algebra $AV$. We also establish our main results in Section 5, proving structure theorems for $\hJ$ and for $\hav$ in \'etale charts.
\subsection*{Acknowledgements}
Research of both authors is supported with grants from the
Natural Sciences and Engineering Research Council of Canada.

\section{\'Etale charts}

Let $K$ be an algebraically closed field of characteristic 0, and let $X$ be a smooth irreducible quasi-projective algebraic variety of dimension $N$. We denote by $\A$ the sheaf of regular functions on $X$,
by $\V$ the sheaf of vector fields, and by $\D$ the sheaf of differential operators on $X$.

Let us describe these sheaves in more detail. Let $U$ be an affine Zariski open subset of $X$, and set
$A = \A(U)$, $V = \V(U)$, $D = \D(U)$ to be the spaces of sections of the respective sheaves over $U$. Here $A$ is a commutative algebra, $V$ is a Lie algebra, and $D$ is an associative algebra.
These algebras are related in the following ways: $V = \Der (A)$ and $D$ is the subalgebra in $\End_{K} (A)$ generated by $A$ (acting on itself by multiplication) and $V$ (acting on $A$ by derivations).

Furthermore, $X$ has an atlas $\{ U_i \}$ of \emph{\'etale charts}, $X = \mathop\bigcup\limits_i U_i$ (see e.g. \cite{BF}).

\begin{definition}
An affine open subset $U \subset X$ is called an \'etale chart if there exist functions
$x_1, \ldots, x_N \in A = \A(U)$ such that

(1) the set $\{x_1, \ldots, x_N\}$ is algebraically independent, that is $K[x_1, \ldots, x_N] \subset A$,

(2) every $f \in A$ is algebraic over $K[x_1, \ldots, x_N]$,

(3) the derivations $\dd{x_1}, \ldots, \dd{x_N}$ of  $K[x_1, \ldots, x_N]$ extend to derivations of $A$. 
\end{definition}

We will call such $\{x_1, \ldots, x_N\}$ \emph{uniformizing parameters} on $U$. Since $A$ is algebraic over 
$K[x_1, \ldots, x_N]$, an extension of $\dd{x_i}$ to $A$ is unique. The vector fields  $\dd{x_1}, \ldots, \dd{x_N}$ commute.

\begin{lemma} (\cite[Theorem III.6.1]{Mu}, \cite{BF})
\label{Mumford}
Let $U$ be an \'etale chart of $X$ with uniformizing parameters ${x_1, \ldots, x_N}$.
Let  $A = \A(U)$, $V = \V(U)$, $D = \D(U)$. Then

(1)
$$ V = \mathop\bigoplus\limits_{i=1}^N A \dd{x_i},$$

(2)
$$ D = \mathop\bigoplus\limits_{k \in \Z_{\geq 0}^N} A \partial^k,$$
where for $k = (k_1, \ldots, k_N)$ we set $\partial^k = 
\left( \dd{x_1} \right)^{k_1} \ldots \left( \dd{x_N} \right)^{k_N}$.

(3)
$$\Omega^1 (U) =  \mathop\bigoplus\limits_{i=1}^N A d x_i$$
with the differential given by
$$df = \sum\limits_{i=1}^N \frac {\partial f}{\partial x_i} dx_i  \text{ \ for \ } f \in A.$$

(4) The map $(x_1, \ldots, x_N): \ U \rightarrow \Aff^N$ is \'etale.

(5) For every $P \in U$ functions $t_1 = x_1 - x_1(P), \ldots, t_N = x_N - x_N(P)$ generate 
$\m_P / \m^2_P$.
\end{lemma}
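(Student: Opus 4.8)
The plan is to treat part~(1) (equivalently part~(3)) as the engine driving the whole lemma: once $\dd{x_1},\dots,\dd{x_N}$ are shown to be a free $A$-basis of $V=\Der(A)$ and, dually, $dx_1,\dots,dx_N$ a free $A$-basis of $\Omega^1(U)$, parts~(2), (4) and (5) follow by a filtration argument, by base change on differentials, and by a cotangent-space computation, respectively. I would start by setting $\partial_i=\dd{x_i}$, which belong to $\Der(A)=V$ by condition~(3) of the definition of an \'etale chart and satisfy $\partial_i(x_j)=\delta_{ij}$.

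For part~(1), linear independence is immediate, since applying $\sum_i a_i\partial_i$ to $x_j$ returns $a_j$. For spanning, given $D\in\Der(A)$ I would form $D'=D-\sum_i D(x_i)\,\partial_i$, which annihilates every $x_j$ and hence all of $K[x_1,\dots,x_N]$. The heart of the argument is to show that such a $D'$ is zero. Here conditions~(1)--(2) and the characteristic-$0$ hypothesis enter: every $f\in A$ is algebraic over $K[x_1,\dots,x_N]$ and therefore separable over it, so differentiating the minimal polynomial relation $p(f)=0$ and using that $D'$ kills the coefficients leaves $p'(f)\,D'(f)=0$; as $p'(f)\neq0$ by separability and $A$ is a domain, $D'(f)=0$. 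Thus $D'=0$ and $D=\sum_i D(x_i)\,\partial_i$. Dualizing and using that $U$ is smooth, so that $\Omega^1(U)$ is locally free of rank $N$ and canonically dual to $V$, gives part~(3) with the $dx_i$ as the dual basis and the formula $df=\sum_i \partial_i(f)\,dx_i$ read off from $\langle\partial_j,df\rangle=\partial_j(f)$. Equivalently one can establish (3) first, by differentiating minimal polynomials to see that the $dx_i$ generate $\Omega^1(U)$ and then noting that a surjection $A^N\twoheadrightarrow\Omega^1(U)$ between locally free modules of equal rank $N$ is an isomorphism.

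For part~(4), the morphism $\phi=(x_1,\dots,x_N)\colon U\to\Aff^N$ pulls $dx_i$ back to $dx_i$, so by part~(3) the comparison map on differentials is an isomorphism and $\phi$ is unramified; since $U$ is smooth, $\Aff^N$ regular of the same dimension, and $\phi$ quasi-finite, I would invoke miracle flatness to obtain flatness, whence $\phi$ is \'etale. For part~(2), I would use the order filtration on $D$, whose associated graded is $\mathrm{Sym}_A(V)$ because $U$ is smooth; as $V$ is free with basis $\partial_i$, this symmetric algebra is the polynomial ring on the principal symbols of the $\partial^k$, so those symbols form a graded basis and the $\partial^k$ therefore form an $A$-basis of $D$. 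For part~(5), the canonical isomorphism $\m_P/\m_P^2\cong\Omega^1(U)\otimes_A\kappa(P)$ carries $f-f(P)$ to $df\otimes1$; applied to the $t_i$ it sends them to $dx_i\otimes1$, which is a $\kappa(P)$-basis since $\{dx_i\}$ is an $A$-basis of $\Omega^1(U)$, so the $t_i$ generate $\m_P/\m_P^2$.

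The main obstacle is the spanning step underlying parts~(1) and~(3): everything downstream is formal, but proving that the $dx_i$ generate $\Omega^1(U)$ --- equivalently that a derivation killing the uniformizing parameters vanishes --- genuinely relies on the separable-algebraic structure of $A$ over $K[x_1,\dots,x_N]$ afforded by characteristic~$0$. A secondary point needing care is part~(4), where flatness must be supplied separately in order to upgrade ``unramified'' to ``\'etale.''
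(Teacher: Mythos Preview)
Your argument is correct; the differences from the paper are mainly in the logical ordering and the level of detail. The paper proceeds in a different order: it cites \cite{BF} for part~(1), gets part~(2) from~(1), and then argues part~(4) \emph{directly} from the three defining conditions of an \'etale chart --- condition~(1) gives dominance, condition~(2) gives quasi-finiteness, and condition~(3) is read as giving smoothness --- so that smooth plus quasi-finite yields \'etale. Parts~(3) and~(5) are then dismissed as standard consequences of~(4). Your route instead proves~(1) from scratch via the separability/minimal-polynomial argument, dualizes to~(3), and only then derives~(4) by showing $\phi$ is unramified (via the differential criterion from~(3)) and flat (via miracle flatness), with~(5) coming from the cotangent identification $\m_P/\m_P^2\cong\Omega^1(U)\otimes_A\kappa(P)$. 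Your version is more self-contained and makes explicit the one genuine computation --- that a derivation killing the $x_i$ vanishes on all of $A$ --- which the paper hides inside the citation to~\cite{BF}. The paper's version, on the other hand, packages the passage to~(4) more cleanly by invoking smoothness rather than separately checking unramified-ness and flatness. One small point: your use of miracle flatness tacitly relies on $\phi$ being quasi-finite, which you state but do not derive; as in the paper, this comes from condition~(2) of the definition.
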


Part (1) of this lemma is well-known (see e.g., \cite{BF}), while part (2) follows from (1) and the fact that 
$D$ is generated by $A$ and $V$.
Consider the morphism $\phi: U \to \Aff^N$. Condition (1) of the definition shows that $\phi$ is dominant, condition (2) shows that $\phi$ is quasifinite and lastly, condition (3) says that $\phi$ is smooth. So we can conclude (4), that $\phi$ is \'etale. The other properties are properties of \'etale maps.

\section{Rinehart enveloping algebras}

Fix an affine open set $U$ and consider the commutative algebra $A = \A(U)$, and the Lie algebra of vector fields $V = \V(U)$. 
We point out that $V$ is an $A$-module and $A$ is a $V$-module. This is an example of a {\it Rinehart pair} \cite{Ri}.

\begin{definition}
A Rinehart pair $(A, G)$ consists of a commutative associative unital algebra $A$, and a Lie algebra $G$, with $G$ being an $A$-module,
and $G$ acting on $A$ by derivations, satisfying
$$(a g) (b) = a (g(b)),  \ \ \ \ {\text for \ } a, b \in A, \, g \in G,$$
with the two actions being compatible via the Leibniz rule:
$$[g_1, a \cdot g_2] = g_1 (a) \cdot g_2 + a \cdot [g_1, g_2], \ \ \ \ {\text for \ } a \in A, \, g_1, g_2 \in G.$$ 
\end{definition}
The action of $G$ on $A$ yields a homomorphism of Lie algebras $\rho: \, G \rightarrow V = \Der A$, which is called the {\it anchor map}.

For a Rinehart pair $(A, G)$ we can define two associative enveloping algebras, the {\it weak Rinehart enveloping algebra} $U(A,G)$ 
and the {\it strong Rinehart enveloping algebra} $D(A,G)$. 

The weak Rinehart enveloping algebra 
$$U(A,G) = A\# \cU(G)$$
is the smash product of the universal enveloping algebra $\cU(G)$, viewed as a Hopf algebra,
with its module $(A, \rho)$. As a vector space it is $A \otimes_K \cU(G),$
with $A$ and $\cU(G)$ being subalgebras in $U(A,G)$, and the commutation relation between $A$ and 
$\cU(G)$ given by the Leibniz rule:
$$ g \cdot a = a \cdot g + \rho(g)(a), \text{\ for \ } g \in G, \ a \in A.$$
Here $\cdot$ denotes the product in $U(A,G)$. We will also use the notation $a \cdot g = a \# g$. 

The strong Rinehart enveloping algebra $D(A,G)$ is the quotient of the weak Rinehart enveloping algebra $U(A,G)$ by the
ideal generated by the elements $\{ a \cdot g - ag \, | \, a\in A, g\in G\}$. Here $a \cdot g$ denotes the product in $U(A,G)$,
while $ag \in G$ is obtained via the $A$-module structure on $G$.

 When $G = V$, we will denote the weak Rinehart algebra $U(A, V)$ by $AV$. In this case the strong Rinehart algebra $D(A,V)$ is
the algebra $D$ of differential operators \cite{Ba}.

We point out that the subspace $A\# G = A \otimes_K G$ in $U(A, G)$ is a Lie subalgebra with the Lie bracket
\begin{equation}
\label{smash}
[a_1 \# g_1, a_2 \# g_2] = a_1 \rho(g_1)(a_2) \# g_2 - a_2 \rho(g_2) (a_1) \# g_1 + a_1 a_2 \# [g_1, g_2], 
\end{equation}
for $a_1, a_2 \in A, g_1, g_2 \in G$.
There is a natural left $A$-action on $A\# G$ (on the left tensor factor), and there is a natural action of $A \# G$ on $A$ by derivations.

It is easy to check that the following lemma holds:
\begin{lemma}
For a Rinehart pair $(A, G)$, the pair $(A, A\#G)$ is again a Rinehart pair. 
\end{lemma}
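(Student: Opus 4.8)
The plan is to verify the defining axioms of a Rinehart pair for $(A, A\#G)$ one at a time, observing that all but one have essentially been recorded already in the discussion preceding the lemma. The algebra $A$ is commutative, associative and unital; the space $A\#G = A\otimes_K G$ is a Lie algebra under the bracket \eqref{smash}; it carries a left $A$-module structure via multiplication on the left tensor factor, $b\cdot(a\#g) = (ba)\#g$; and it acts on $A$. The only axiom demanding genuine work is the Leibniz compatibility between the $A$-action and the bracket, and my plan is to reduce its verification to a direct expansion on simple tensors.

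First I would make the anchor map of the new pair explicit. Since $A$ is a module over the associative algebra $U(A,G)$, with $A$ acting by multiplication and $\cU(G)$ through $\rho$, the element $a\#g$ acts on $b\in A$ by $\rho'(a\#g)(b) = a\,\rho(g)(b)$. That each $\rho'(a\#g)$ is a derivation of $A$ follows at once from the Leibniz rule for $\rho(g)$, since $\rho'(a\#g)(bc) = a\rho(g)(bc) = (a\rho(g)(b))c + b(a\rho(g)(c))$. Moreover $\rho'$ is a homomorphism of Lie algebras, because it is the restriction to the Lie subalgebra $A\#G \subset U(A,G)$ of the associative action of $U(A,G)$ on $A$, and restricting an associative module structure to a Lie subalgebra always sends the commutator bracket to the commutator of operators. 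Thus $A\#G$ indeed acts on $A$ by derivations, giving the anchor map for the new pair.

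The heart of the argument is then the Leibniz identity $[s_1, b\cdot s_2] = \rho'(s_1)(b)\cdot s_2 + b\cdot[s_1,s_2]$ for $b\in A$ and $s_1,s_2\in A\#G$. By bilinearity it suffices to take $s_1 = a_1\#g_1$ and $s_2 = a_2\#g_2$. I would expand the left-hand side by applying \eqref{smash} to $[a_1\#g_1, (ba_2)\#g_2]$ and then using the derivation property $\rho(g_1)(ba_2) = \rho(g_1)(b)\,a_2 + b\,\rho(g_1)(a_2)$ to split the first resulting term. Separately I would expand $\rho'(s_1)(b)\cdot s_2 = a_1\rho(g_1)(b)a_2\#g_2$ together with $b\cdot[s_1,s_2]$ via \eqref{smash} and the left $A$-action. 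Both sides reduce to the same four-term sum $a_1\rho(g_1)(b)a_2\#g_2 + a_1 b\rho(g_1)(a_2)\#g_2 - ba_2\rho(g_2)(a_1)\#g_1 + a_1a_2b\#[g_1,g_2]$, where the lone $\rho(g_1)(b)$ term supplies the anchor contribution and the remaining three terms reassemble $b\cdot[s_1,s_2]$. I expect no conceptual obstacle here; the only thing to watch is the bookkeeping of the commutative products of functions in the left tensor factor, which is precisely why it is convenient to carry out the comparison on simple tensors.
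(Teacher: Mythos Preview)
Your proposal is correct and is precisely the direct verification the paper has in mind: the authors give no proof at all, merely recording that ``it is easy to check'' the lemma, so your expansion of the Leibniz identity \eqref{smash} on simple tensors is exactly the routine computation being left to the reader.
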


The next lemma exhibits a relationship between strong and weak Rinehart enveloping algebras:

\begin{lemma}
$U(A, G) \cong D(A, A\#G)$.
\end{lemma}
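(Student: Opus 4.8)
The plan is to exhibit mutually inverse algebra homomorphisms between $U(A,G)$ and $D(A, A\#G)$ by specifying them on generators and checking compatibility with the defining relations, relying throughout on the universal property of the smash product (a homomorphism out of $A \# \cU(\cdot)$ is determined by an algebra map on $A$, a Lie map on the Lie algebra, and the cross relation between them).

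First I would construct $\Phi \colon U(A,G) \to D(A, A\#G)$. Since $U(A,G)$ is generated by the subalgebra $A$ and the Lie algebra $G$, it suffices to set $\Phi(a) = a$ for $a \in A$ and $\Phi(g) = 1 \# g$ for $g \in G$, where $1 \# g$ denotes the image in $D(A, A\#G)$ of the element $1\#g \in A\#G$. To see this is well defined I would check the relations of the smash product $U(A,G) = A \# \cU(G)$. Multiplicativity on $A$ is immediate. That $\Phi$ respects the bracket of $G$ follows from formula (\ref{smash}): since $\rho(g)(1)=0$ for a derivation, $[1\#g_1,\,1\#g_2] = 1 \# [g_1,g_2]$. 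Finally the Leibniz relation $g\cdot a - a\cdot g = \rho(g)(a)$ transports correctly because the anchor map of the pair $(A, A\#G)$ sends $1\#g$ to the derivation $\rho(g)$, so that $(1\#g)\cdot a - a\cdot(1\#g) = \rho(g)(a)$ holds in $D(A, A\#G)$.

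Next I would build the candidate inverse. I first produce $\widetilde\Psi \colon U(A, A\#G) \to U(A,G)$, using that $U(A, A\#G) = A\#\cU(A\#G)$ is generated by $A$ together with the Lie algebra $A\#G$, and that the latter is already a Lie subalgebra of $U(A,G)$ with bracket (\ref{smash}). I set $\widetilde\Psi(a)=a$ and let $\widetilde\Psi$ be the inclusion on $A\#G$, so $b\#g \mapsto b\cdot g$, the product in $U(A,G)$. Preservation of the bracket is then automatic from the definition of the Lie structure on $A\#G$; the one computation to perform is the Leibniz relation for $(A, A\#G)$, namely that $(b\cdot g)\cdot a - a\cdot(b\cdot g) = b\,\rho(g)(a)$ in $U(A,G)$, which matches the action of $b\#g$ on $A$. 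I would then verify that $\widetilde\Psi$ annihilates the generators $a\cdot s - as$ of the ideal defining $D(A, A\#G)$: for $s=b\#g$ both $a\cdot s$ and $as$ map to $(ab)\cdot g$, so the difference vanishes. Hence $\widetilde\Psi$ descends to $\Psi \colon D(A, A\#G) \to U(A,G)$.

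Finally I would check that $\Phi$ and $\Psi$ are mutually inverse by evaluating on generators. The composite $\Psi\Phi$ fixes $a$ and sends $g \mapsto 1\#g \mapsto 1\cdot g = g$, so it is the identity. For $\Phi\Psi$, the only point needing attention is that $b\#g \mapsto b\cdot g \mapsto b\cdot(1\#g)$ must return to $b\#g$, and this is precisely where the defining relation of the strong enveloping algebra is invoked: in $D(A, A\#G)$ one has $b\cdot(1\#g) = b(1\#g) = b\#g$. I expect the main obstacle to be notational rather than conceptual. One must carefully distinguish, inside $D(A, A\#G)$, the smash product $a\cdot s$ from the $A$-module action $as$ on $A\#G$, and recognize that the strong enveloping relation identifies exactly these two operations --- which is what makes $\Phi\Psi=\id$ succeed. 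Keeping the two roles of $A$ separate throughout is the only genuine bookkeeping hazard.
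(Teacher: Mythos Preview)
Your argument is correct. The paper's proof takes a different, terser route: it simply asserts that both $U(A,G)$ and $D(A,A\#G)$ are quotients of the tensor algebra $T(A\oplus G)$ by the \emph{same} three families of relations ($a_1\cdot a_2 = a_1 a_2$, $g_1\cdot g_2 - g_2\cdot g_1 = [g_1,g_2]$, and $g\cdot a = \rho(g)(a) + a\cdot g$), so they coincide. You instead build explicit inverse homomorphisms via universal properties. The two approaches are closely related: the paper's presentation claim rests on the fact that the strong relation $a\cdot(b\#g) = (ab)\#g$ collapses the generating set of $D(A,A\#G)$ from $A\cup(A\#G)$ down to $A\cup G$, after which the bracket relations in $\cU(A\#G)$ become consequences of the three listed relations --- and your verification that $\Phi\Psi(b\#g) = b\#g$ is exactly this collapse made explicit. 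Your route carries out the bookkeeping that the paper leaves implicit; the paper's route is more economical once that step is granted.
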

\begin{proof}
It can be seen that both algebras are quotients of the tensor algebra $T(A \oplus G)$ subject to the same relations:
\begin{align*}
a_1 \cdot a_2 &= a_1 a_2, \\
g_1 \cdot g_2 - g_2 \cdot g_1 &= [g_1, g_2], \\
g_1 \cdot a_1 &= \rho(g_1) (a_1) + a_1 \cdot g_1,
\end{align*}
for $a_1, a_2 \in A$, $g_1, g_2 \in G$.
\end{proof}

%

\begin{definition}
A Lie algebroid on $X$ is a topologically quasi-coherent (see Definition \ref{topq} below) 
sheaf $\G$ such that for any open set $U$ the pair $(\A(U), \G(U))$ is a Rinehart pair, and the restriction maps are 
the morphisms of the corresponding Rinehart pairs.
\end{definition}

Given a Lie algebroid $\G$, we can sheafify the constructions of weak and strong Rinehart enveloping algebras and obtain sheaves of associative 
algebras $U(\A, \G)$ and $\D (\A, \G)$. When $\G$ is the tangent bundle $\V$, we will denote the sheaf $U(\A, \V)$ by $\AV$, while the sheaf
$\D(\A, \V)$ is the sheaf $\D$ of differential operators on $X$. 

We point out that the sheaf $\AV$ is naturally a left $\A$-module, since for each affine open set $U$ the algebra $AV$ is a left $A$-module, and this structure is compatible with the restriction maps.
Yet, $\AV$ is not, in general, a quasi-coherent sheaf.  Note that,
for an affine open $U$ we have
$\AV(U) = \A(U) \otimes_K \V(U),$ which is naturally a module over $\A(U)\otimes_k\A(U)$ and so naturally gives a quasi-coherent on the product $X\times X$.

Let us recall a definition of a quasi-coherent sheaf:

\begin{theorem} (see \cite[Theorem-Definition III.1.3]{Mu}, \cite[Prop.~5.2.2]{Kempf})
\label{localization}
An $\A$-module sheaf $\F$ is quasi-coherent if and only if there exists an affine open cover $
X = \mathop\bigcup\limits_i U_i$ such that for every non-zero $g \in \A(U_i)$ we have that the multiplication map below is an isomorphism:
$$\A (U_i)_g \otimes_{\A(U_i)} \F (U_i) \rightarrow \F (U_i^g),$$
where $U_i^g$ is $U_i$ without the zero locus of $g$.  
\end{theorem}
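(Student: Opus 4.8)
The plan is to reduce everything to the standard correspondence between $A$-modules and sheaves on $\Spec A$, testing the asserted isomorphism on the basis of distinguished open sets. I would fix one chart of the cover and write $A_i = \A(U_i)$, so that $U_i = \Spec A_i$. For an $A_i$-module $M$, let $\widetilde{M}$ denote the associated sheaf on $U_i$, characterized by the property that its sections over a distinguished open $U_i^g$ are the localization $M_g = (A_i)_g \otimes_{A_i} M$, with restriction maps the canonical localization maps. Quasi-coherence of $\F$ is, by definition, the existence of an affine cover on which $\F|_{U_i} \cong \widetilde{M}$ for some $A_i$-module $M$; taking global sections over $U_i$ forces $M \cong \F(U_i)$. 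The statement is then the claim that this local isomorphism is equivalent to the displayed localization formula.

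For the forward direction I would suppose $\F$ is quasi-coherent, so that on a suitable affine cover $\F|_{U_i} \cong \widetilde{M}$ with $M = \F(U_i)$. Evaluating both sheaves on the distinguished open $U_i^g$ and using that $\widetilde{M}(U_i^g) = M_g = (A_i)_g \otimes_{A_i} M$, the comparison map in the statement is exactly the identification of $\F(U_i^g)$ with this localization, and it is an isomorphism by the defining property of $\widetilde{M}$. This is the easy direction, since it merely repackages the known description of the sections of $\widetilde{M}$.

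For the converse I would assume the localization formula holds on the given cover and, on each $U_i$, set $M = \F(U_i)$. The restriction map $\F(U_i) \to \F(U_i^g)$ is $A_i$-linear, and its target is a module over $\A(U_i^g) = (A_i)_g$; by the universal property of localization it factors uniquely through $(A_i)_g \otimes_{A_i} \F(U_i) \to \F(U_i^g)$, which is precisely the multiplication map of the statement. The hypothesis says this map is an isomorphism for every $g$, so $\F|_{U_i}$ and $\widetilde{M}$ agree, compatibly with restrictions, on the distinguished opens $U_i^g$. Since these opens form a basis for the topology of $U_i$ and both $\F|_{U_i}$ and $\widetilde{M}$ are sheaves, agreement on this basis extends to an isomorphism $\F|_{U_i} \cong \widetilde{M}$; as this holds on every chart, $\F$ is quasi-coherent.

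The main obstacle will not lie in either comparison map but in the input fact that the assignment $U_i^g \mapsto M_g$ genuinely defines a sheaf $\widetilde{M}$ on $U_i = \Spec A_i$, i.e.\ that the localizations $M_g$ satisfy the gluing axiom on the basis of distinguished opens. Establishing this requires translating a cover $U_i^g = \bigcup_j U_i^{g_j}$ into the algebraic statement that $g$ lies in the radical of the ideal generated by the $g_j$, invoking quasi-compactness of distinguished opens to reduce to finite covers, and then verifying exactness of the associated localization sequence. This is the technical heart of the affine theory and is exactly what the cited references supply; once it is in hand, promoting an isomorphism from the basis of distinguished opens to the full sheaf is routine.
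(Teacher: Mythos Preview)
The paper does not actually prove this statement; it simply quotes it as a known characterization of quasi-coherence and cites \cite[Theorem-Definition III.1.3]{Mu} and \cite[Prop.~5.2.2]{Kempf} for the proof. Your sketch is the standard argument found in those references: identify $\F|_{U_i}$ with $\widetilde{\F(U_i)}$ by checking agreement on the basis of distinguished opens, with the substantive algebraic input being that $U_i^g \mapsto M_g$ is a sheaf on $\Spec A_i$. So there is nothing to compare against the paper itself, and your proposal is correct and matches the classical proof.
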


We see that for the sheaf $\AV$ the space in the left hand side is
$$\A (U)_g \otimes_{\A(U)} \AV (U) \cong \A (U)_g \otimes_K \cU (\V(U)),$$
while the space in the right hand side is
$$\AV (U^g) \cong \A (U)_g \otimes_K \cU (\V(U^g)),$$
and in general the natural map between these two spaces is not surjective. Thus, the sheaf $\AV$ is not quasi-coherent. 
It is well-known that the sheaf $\D$ of differential operators is quasi-coherent.
One of the main goals of this paper is to define a completion of $\AV$ which will become topologically quasi-coherent. 

\section{Jets}

Let $U$ be an \'etale chart with the uniformizing parameters $\{ x_1, \ldots, x_N \}$ and let $f \in A = \A(U)$. 
Consider the map $A \rightarrow A[[t_1,\ldots,t_N]]$:
{${}^1$}\footnote{{${}^1$}In the published version of this paper, we made a mistake of identifying $A[[t]]$ with
$A \otimes K[[t]]$. Tensor products certainly do not commute with the inverse limits: 
$A[[t]] = \lim\limits_{\longleftarrow}A \otimes K[t]/\left<t^m\right>$ and $A \otimes K[[t]] = A \otimes \lim\limits_{\longleftarrow} K[t]/\left<t^m\right>$. There is a proper inclusion $A \otimes K[[t]] \subset A[[t]]$.}:
\begin{equation}
\label{Taylor_xt}
f \mapsto \sum_{m \in \Z^N_{\geq 0}} \frac{1}{m!} \frac{\partial^m f}{\partial x^m} t^m.
\end{equation}
To simplify notations we will denote $A[[t_1,\ldots,t_N]]$ by $A[[t]]$.

The image of $f$ under this map is called the {\it jet} of a function $f$. The jet of $f$ can be seen to be a formal Taylor expansion of $f(x+t)$. It is well-known from the first-year calculus (and can be easily checked directly) that this map is a homomorphism of commutative algebras. To simplify notations, we will actually denote the right hand side of (\ref{Taylor_xt}) by $f(x+t)$. In the sum $x+t$ the first variable will denote the point of expansion in a series, and all subsequent variables, as in $x+t+s$ will be formal parameters. We emphasize that expressions $f(x+t)$ and $f(t+x)$ are not the same.

Viewing $A[[t]]$ as an $A$-module, we extend the above map to another homomorphism of commutative algebras
$$j: \, A\otimes_K A \rightarrow A[[t]],$$
where 
\begin{equation}
\label{jet}
j (g \otimes f) = \sum_{m \in \Z^N_{\geq 0}} \frac{1}{m!} g(x) \frac{\partial^m f}{\partial x^m} t^m
\, = g(x) f(x+t).
\end{equation}

\begin{lemma}
The map $j: \, A\otimes_K A \rightarrow A[[t]]$ is injective.
\end{lemma}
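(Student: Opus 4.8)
The plan is to recognize $j$ as the $\Delta$-adic completion map of the coordinate ring $R = A \otimes_K A$ of $U \times U$ and then to invoke Krull's intersection theorem. Write $\mu\colon R \to A$ for the multiplication map, so that $\Delta = \ker\mu$. Since $K$ is algebraically closed and $U$ is irreducible, $U \times U$ is irreducible, so $R$ is a Noetherian domain and $\Delta$ is a proper ideal; by Krull's intersection theorem $\bigcap_{n\ge 1}\Delta^n = 0$ (if $u$ lies in the intersection then $(1-a)u = 0$ for some $a \in \Delta$, and $1-a \ne 0$ in the domain $R$). Thus it suffices to prove $\ker j \subseteq \bigcap_{n}\Delta^n$.

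First I would reformulate the jet map through $\mu$. Let $D_i = 1 \otimes \partial_i$ be the derivation of $R$ differentiating the right tensor factor, and $D^m = D_1^{m_1}\cdots D_N^{m_N}$. A direct computation from~(\ref{jet}) shows that the coefficient of $t^m$ in $j(u)$ equals $\tfrac{1}{m!}\mu(D^m u)$, so
$$\ker j = \{\, u \in R : D^m u \in \Delta \text{ for all } m \in \Z^N_+ \,\}.$$
In particular (take $m=0$) every $u \in \ker j$ lies in $\Delta$, and $\ker j$ is stable under each $D_i$, since $D^m(D_i u) = D^{m+e_i}u \in \Delta$.

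The structural input is the description of the associated graded ring. Because $U$ is an \'etale chart, the diagonal $U \hookrightarrow U\times U$ is a regular immersion: the elements $\tau_i = x_i \otimes 1 - 1\otimes x_i$ generate $\Delta$ and form a regular sequence, with $\Delta/\Delta^2 \cong \Omega^1(U)$ free on the classes $\bar\tau_i = dx_i$ by Lemma~\ref{Mumford}(3); hence $\operatorname{gr}_\Delta R \cong A[\xi_1,\dots,\xi_N]$ with $\xi_i = \bar\tau_i$. Since $\partial_i x_j = \delta_{ij}$ and $D_i(a\otimes 1)=0$, we have $D_i \tau_j = \delta_{ij}$, so $D_i$ lowers the $\Delta$-order by one and acts on $\operatorname{gr}_\Delta R$ as the formal partial derivative $\partial/\partial\xi_i$. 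I would then prove $\ker j \subseteq \Delta^n$ for all $n$ by induction, the base case $n=1$ being the observation above. Assuming $\ker j \subseteq \Delta^n$, take $u \in \ker j$; then $u \in \Delta^n$ and also $D_i u \in \ker j \subseteq \Delta^n$ for every $i$. Passing to $\operatorname{gr}^n = \Delta^n/\Delta^{n+1}$, the class $\bar u \in A[\xi_1,\dots,\xi_N]$ is homogeneous of degree $n$, and $D_i u \in \Delta^n$ says exactly that its image $\partial\bar u/\partial\xi_i$ in $\operatorname{gr}^{n-1}$ vanishes for all $i$. In characteristic $0$ a homogeneous polynomial of degree $n \ge 1$ with all partial derivatives zero is itself $0$, so $\bar u = 0$, i.e. $u \in \Delta^{n+1}$. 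Hence $\ker j \subseteq \bigcap_n \Delta^n = 0$.

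The main obstacle is the structural claim of the previous paragraph — that $\tau_1,\dots,\tau_N$ generate $\Delta$ and that $\operatorname{gr}_\Delta R$ is the polynomial ring $A[\xi]$ — which is precisely where smoothness of $U$ (the \'etale-chart hypothesis and the freeness of $\Omega^1$) is genuinely used; everything after it is formal. An alternative route that avoids the regular-immersion input is to pass to the fraction field $F = \operatorname{Frac}(A)$ (the relevant maps stay injective since $A$ is free over the field $K$) and establish injectivity of $j$ over the differential field $F$: one checks that the field of constants of $F$ for $\partial_1,\dots,\partial_N$ is $K$ (using that $K$ is algebraically closed and $x_1,\dots,x_N$ is a separating transcendence basis), and then the generalized Wronskian criterion produces, for $K$-linearly independent $f_i$, multi-indices $m^{(j)}$ with $\det(\partial^{m^{(j)}}f_i)\ne 0$, forcing all coefficients $g_i$ in a putative kernel element to vanish.
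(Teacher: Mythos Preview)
Your overall strategy --- showing $\ker j\subseteq\bigcap_n\Delta^n$ by induction on the associated graded and then invoking Krull's intersection theorem --- is sound, but one step needs correction. You assert that $\tau_1,\dots,\tau_N$ \emph{generate} $\Delta$ and form a regular sequence. This is false in general: the ideal $(\tau_1,\dots,\tau_N)$ cuts out $U\times_{\Aff^N}U$ inside $U\times U$, and since the \'etale chart map $U\to\Aff^N$ is only \'etale (not an immersion), the diagonal is merely an open-and-closed component of $U\times_{\Aff^N}U$, typically a proper one. For a concrete counterexample take $A=K[y,y^{-1}]$ with uniformizing parameter $x=y^2$; then $\tau_1=y^2\otimes 1-1\otimes y^2=(y\otimes 1-1\otimes y)(y\otimes 1+1\otimes y)$ generates a strictly smaller ideal than $\Delta=(y\otimes 1-1\otimes y)$. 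What your induction actually needs, and what is true, is only that $\operatorname{gr}_\Delta R\cong A[\xi_1,\dots,\xi_N]$ with the $\xi_i$ the classes of the $\tau_i$: since $U$ is smooth the diagonal is a regular immersion, so $\operatorname{gr}_\Delta R=\operatorname{Sym}_A(\Delta/\Delta^2)=\operatorname{Sym}_A(\Omega^1)$, and Lemma~\ref{Mumford}(3) then gives freeness on $dx_1,\dots,dx_N$. With this fix (and the harmless sign $D_i\tau_j=-\delta_{ij}$ rather than $+\delta_{ij}$) your induction goes through verbatim.

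The paper's proof is quite different and avoids all graded or completion machinery: it uses that the Taylor expansion $f\mapsto f(P+t)$ at a closed point is injective, tensors two such expansions to embed $A\otimes_K A\hookrightarrow A\otimes K[[t,s]]$ via $g\otimes f\mapsto g(x+t)f(x+s)$, performs the substitution $s=t+r$, and then specializes $t=0$. Your route is more structural and, once repaired, immediately yields the stronger statement $j^{-1}(A\otimes t^kK[[t]])=\Delta^k$ that the paper establishes separately as Lemma~\ref{idealpower}; the paper's route is shorter and needs no input about the associated graded. The Wronskian alternative you outline is a third independent approach.
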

\begin{proof}
Let $P$ be a point in $U$. Then representation of functions by their Taylor series at $P$ is an injective homomorphism $A \rightarrow K[[t]]$:
$$f  \mapsto \sum_{m \in \Z^N_{\geq 0}} \frac{1}{m!} \frac{\partial^m f}{\partial x^m}(P) t^m.$$ 
We shall denote the right hand side above by $f(P+t)$.

Since $A \otimes A$ is the algebra of functions on $U \times U$, for a pair of points $P_1, P_2 \in U$
we have an injective homomorphism  $A \otimes A \rightarrow K[[t, s]]$:
$$g \otimes f \mapsto g(P_1 + t) f(P_2 + s).$$
Since $P_1, P_2$ are arbitrary, we can choose them to be the same point $P$ and we get an
injective map $g \otimes f \mapsto g(P + t) f(P + s)$. Taking $P$ to be a generic point, we get 
an injective homomorphism $A \otimes A \rightarrow A[[t, s]]$:
$$g \otimes f \mapsto g(x + t) f(x + s).$$
Making a substitution $s = t + r$, where $r$ is a new formal parameter, we get an injective map
$A \otimes A \rightarrow A[[t, r]]$:
$$g \otimes f \mapsto g(x + t) f(x + t + r) = 
\sum_{m \in \Z^N_{\geq 0}} \frac{1}{m!} g(x+t) \frac{\partial^m f}{\partial x^m}(x+t) r^m.$$
However, the map (\ref{Taylor_xt}) is an isomorphism to its image, and thus setting $t = 0$ in the above expression, we still have an injective homomorphism $j: \, A\otimes_K A \rightarrow A[[r]]$.
\end{proof}

The {\it algebra of jets} will be the completion $A \wo A$ of  $A \otimes A$ in the topology of $A[[t]]$. 
Let us discuss this definition in detail.  

Consider the multiplication map $A \otimes_K A \rightarrow A$. Let $\Q \vartriangleleft A \otimes_K A$ be the kernel of this map. 
 
We define the algebra of jets of functions as the following completion of a commutative algebra $A \otimes_K A$:
 $$A \wo A = \varprojlim\limits_k (A \otimes_K A) / \Q^k.$$
 Elements of $A \wo A$ can be written as infinite series
 $\sum\limits_{k=0}^\infty u_k$ with $u_k \in \Q^k$ (assuming $\Q^0 = A \otimes A$). Two series 
 are equal,
 $\mathop\sum\limits_{k=0}^\infty u_k = \mathop\sum\limits_{k=0}^\infty w_k,$
 if and only if $\sum\limits_{k=0}^m (u_k - w_k) \in \Q^{m+1}$ for all $m = 0, 1, 2, \ldots$. 

\begin{lemma}
Algebra $A\otimes_K A$ is separated with respect to ideal $\Q$, that is 
$\mathop\bigcap\limits_{k=1}^\infty \Q^k = 0$.
\end{lemma}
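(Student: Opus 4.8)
The plan is to transport the statement through the injective jet homomorphism $j\colon A\otimes_K A \to A\otimes K[[t]]$ of the previous lemma, where $A\otimes K[[t]]$ is understood as the algebra of power series in $t=(t_1,\dots,t_N)$ with coefficients in $A$ (the completed tensor product, as its image already contains infinite series $f(x+t)$). In this target algebra the analogue of the $\Q$-adic filtration --- filtration by powers of the ideal $\m=(t_1,\dots,t_N)\vartriangleleft K[[t]]$ --- is visibly separated: a power series lying in $A\otimes\m^k$ for every $k$ has all of its coefficients equal to zero, so $\bigcap_k(A\otimes\m^k)=0$. Since $j$ is injective, it suffices to show that $j$ carries the $\Q$-adic filtration into the $\m$-adic one, i.e. that $j(\Q^k)\subseteq A\otimes\m^k$ for all $k$; then $j\big(\bigcap_k\Q^k\big)\subseteq\bigcap_k(A\otimes\m^k)=0$, and injectivity forces $\bigcap_k\Q^k=0$.

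The key step is the inclusion $j(\Q)\subseteq A\otimes\m$, which I would read off the defining formula $j(g\otimes f)=g(x)f(x+t)$: setting $t=0$ gives $g(x)f(x)=gf$, so the $t$-constant term of $j(u)$ is exactly the image of $u$ under the multiplication map $A\otimes_K A\to A$. An element $u$ lies in $\Q$ precisely when this constant term vanishes, i.e. precisely when $j(u)\in A\otimes\m$. Because $j$ is an algebra homomorphism and $A\otimes\m$ is an ideal of $A\otimes K[[t]]$ with $(A\otimes\m)^k\subseteq A\otimes\m^k$, applying $j$ to a product $u_1\cdots u_k$ of elements of $\Q$ lands in $A\otimes\m^k$; hence $j(\Q^k)\subseteq A\otimes\m^k$ for every $k$, as required. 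Combining this with the separatedness above yields $\bigcap_k\Q^k=0$.

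The only genuine content here is the injectivity of $j$, which is already established, so the remaining work is the bookkeeping of the filtration inclusion; I expect no real obstacle beyond insisting that the target $A\otimes K[[t]]$ be treated as a power-series (completed) algebra, so that $\bigcap_k(A\otimes\m^k)$ is honestly zero. An alternative purely algebraic route would invoke the Krull intersection theorem for the Noetherian ring $A\otimes_K A$, but since $\Q$ is not contained in the Jacobson radical one would still need the geometric input that the diagonal of $U\times U$ is formally cut out; the jet-map argument is therefore both shorter and self-contained.
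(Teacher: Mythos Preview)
Your argument is correct and is essentially the same as the paper's: both use the injectivity of $j$ together with the inclusion $j(\Q^k)\subset A\otimes t^kK[[t]]$ and the obvious separatedness of the $t$-adic filtration on $A\otimes K[[t]]$ to conclude $\bigcap_k\Q^k=0$, and both note the alternative via Atiyah--Macdonald. Your write-up is more detailed (you justify $j(\Q)\subset A\otimes\m$ explicitly via the constant-term observation), but the route is identical; one minor remark is that the alternative via \cite[Cor.~10.18]{AM} applies directly since $A\otimes_K A$ is a Noetherian integral domain over an algebraically closed field, so no Jacobson-radical hypothesis is needed.
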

\begin{proof}
  This follows from the fact that $j(\Q^k) \subset t^k A[[t]]$,
where by $t^k A[[t]]$ we denote the $k$-th power of the ideal generated by
 $\left< t_1,\ldots, t_N \right>$.
  Alternatively, one can use \cite[Cor.~10.18]{AM}.
\end{proof}

For $f \in A$ we set $\delta(f) = f \otimes 1 - 1 \otimes f \in \Q$.
 \begin{lemma}
 \label{Qgen}
 (1) For $f, g \in A$ we have 
 $$\delta(fg) = (f \otimes 1) \delta(g) + \delta(f) (1 \otimes g) \text{\ \ [Leibniz rule].}$$
 
 
 (2) The set $\{\delta(f) \, | \, f \in A \}$ generates $\Q$ as a left (or right) $A$-module.
 
 (3) The set $\{ \delta(f_1) \ldots \delta(f_k)  \, | \, f_1, \ldots, f_k \in A \}$ generates $\Q^k$ as a left (or right) $A$-module.
 
 (4) $\Q / \Q^2 \cong \Omega^1 (U)$.
 \end{lemma}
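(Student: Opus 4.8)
The plan is to dispose of parts (1)--(3) by direct manipulation inside the commutative ring $A \otimes_K A$, reserving the real work for part (4). Part (1) is a one-line verification: writing $\delta(f) = f \otimes 1 - 1 \otimes f$, the right-hand side $(f \otimes 1)\delta(g) + \delta(f)(1 \otimes g)$ multiplies out to $fg \otimes 1 - f \otimes g + f \otimes g - 1 \otimes fg = \delta(fg)$. For part (2) I would take an arbitrary $z = \sum_i g_i \otimes f_i \in \Q$, which by definition of $\Q$ satisfies $\sum_i g_i f_i = 0$ in $A$. Since the left $A$-action is multiplication by $a \otimes 1$, I compute $\sum_i g_i \cdot \delta(f_i) = \sum_i (g_i f_i \otimes 1 - g_i \otimes f_i) = \left(\sum_i g_i f_i\right) \otimes 1 - z = -z$, the first term vanishing because $z \in \Q$. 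Hence $z = -\sum_i g_i \cdot \delta(f_i)$, and the same computation with $1 \otimes a$ in place of $a \otimes 1$ yields the right-module statement.

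Part (3) I would prove by induction on $k$. Every element of $\Q^k$ is a sum of products $u_1 \cdots u_k$ with $u_j \in \Q$; applying part (2) to $u_1$ and the inductive hypothesis to $u_2 \cdots u_k \in \Q^{k-1}$ rewrites each such product as a sum of terms $(a \otimes 1)\delta(f_1)(b \otimes 1)\delta(f_2)\cdots\delta(f_k)$. The crucial point is that $A \otimes_K A$ is commutative, so the scalar factors $a \otimes 1, b \otimes 1, \ldots$ are central and may be collected in front as $(ab\cdots \otimes 1)$; this expresses the product as a left $A$-multiple of $\delta(f_1)\cdots\delta(f_k)$. The right-module version is identical using the central factors $1 \otimes a$.

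The heart of the lemma is part (4), where I would identify $\Q/\Q^2$ with $\Omega^1(U)$ via the universal property of differentials. Define $d : A \to \Q/\Q^2$ by $df = \delta(f) + \Q^2$. The first observation is that on $\Q/\Q^2$ the left and right $A$-module structures coincide, since for $a \in A$ and $z \in \Q$ the difference $(a \otimes 1 - 1 \otimes a)\, z = \delta(a)\, z$ lies in $\Q \cdot \Q = \Q^2$. Granting this, reducing the Leibniz identity of part (1) modulo $\Q^2$ turns it into $d(fg) = f\, dg + g\, df$, so that $d$ is a $K$-linear derivation (note $\delta(\lambda) = 0$ for $\lambda \in K$, so $d$ is $K$-linear). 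The universal property of the module of differentials then produces a unique $A$-linear map $\psi : \Omega^1(U) \to \Q/\Q^2$ with $\psi(df) = \delta(f) + \Q^2$, sending the basis $dx_i$ of Lemma~\ref{Mumford}(3) to $\delta(x_i) + \Q^2$. For the inverse I would use part (2): the classes $\delta(f) + \Q^2$ generate $\Q/\Q^2$ as an $A$-module, so the assignment $\delta(f) + \Q^2 \mapsto df$ defines an $A$-linear surjection $\phi : \Q/\Q^2 \to \Omega^1(U)$, and one verifies $\phi$ and $\psi$ are mutually inverse on generators.

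The main obstacle is establishing that $\phi$ is \emph{well defined} --- that the only relations the classes $\delta(f) + \Q^2$ satisfy in $\Q/\Q^2$ are $K$-linearity and the Leibniz rule, which are exactly the defining relations of $\Omega^1(U)$. This is precisely the classical statement that $\Q/\Q^2$ realizes the module of Kähler differentials $\Omega^1_{A/K}$, and I would either invoke that theorem directly or argue well-definedness by building $\phi$ from the universal derivation and checking $\phi \circ \psi = \id$ and $\psi \circ \phi = \id$ on the generators $\delta(f) + \Q^2$, where part (2) guarantees that these generators exhaust $\Q/\Q^2$.
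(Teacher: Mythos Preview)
Your proposal is correct and follows essentially the same approach as the paper: for (1) a direct expansion, for (2) exactly the same computation $-\sum_i g_i\otimes f_i = \sum_i (g_i\otimes 1)\delta(f_i)$, for (3) induction on $k$, and for (4) the identification of $\Q/\Q^2$ with K\"ahler differentials. The paper is far terser than you are---it simply cites Matsumura for (4)---so your expanded discussion of the universal derivation $d:A\to\Q/\Q^2$ and the construction of the inverse pair $\psi,\phi$ is more than the paper offers, and your honest flagging of the well-definedness of $\phi$ as the real content (equivalent to the classical theorem you would cite) matches precisely what the paper outsources to the reference.
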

 \begin{proof}
 Verification of part (1) is straightforward. To see that (2) holds, consider $\sum_i g_i \otimes f_i \in \Q$. Then
$$- \sum\limits_i g_i \otimes f_i =  \sum\limits_i g_i f_i \otimes  1 - \sum\limits_i g_i \otimes f_i
=  \sum\limits_i (g_i \otimes 1) \delta(f_i) .$$
Part (3) may be easily established by induction using (2) and (1). 

The result (4) is well known, see for instance~\cite[\S 25]{Matsumura}. 
\end{proof}

The proof of the following Lemma is a straightforward calculation.
\begin{lemma} Let $s \in \Z_{\geq 0}^N$. Then
$$j \left( \delta(x)^s \right) = 1 \otimes (-t)^s.$$
\end{lemma}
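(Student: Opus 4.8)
The plan is to use that $j\colon A\otimes_K A\to A\otimes K[[t]]$ is a homomorphism of commutative algebras (it was constructed precisely as the extension of the Taylor map (\ref{Taylor_xt}) to the tensor factors, via (\ref{jet})). Reading $s=(s_1,\dots,s_N)$ as a multi-index, so that $\delta(x)^s=\prod_{i=1}^N\delta(x_i)^{s_i}$ and $t^s=\prod_{i=1}^N t_i^{s_i}$, multiplicativity of $j$ immediately reduces the statement to the single-variable identities
$$j\bigl(\delta(x_i)\bigr)=1\otimes t_i,\qquad i=1,\dots,N,$$
since then $j(\delta(x)^s)=\prod_i j(\delta(x_i))^{s_i}=\prod_i(1\otimes t_i)^{s_i}=1\otimes t^s$. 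Thus the whole argument collapses to one computation per coordinate.

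For that computation I would expand $\delta(x_i)=x_i\otimes 1-1\otimes x_i$ term by term using (\ref{jet}). The first term gives $j(x_i\otimes 1)=x_i\cdot 1(x+t)=x_i\otimes 1$, because the jet of the constant function $1$ is again $1$. The second term gives $j(1\otimes x_i)=x_i(x+t)$, and here the essential input is that $x_1,\dots,x_N$ are uniformizing parameters: by the definition of an \'etale chart one has $\partial x_i/\partial x_j=\delta_{ij}$ and all higher partials of $x_i$ vanish, so the Taylor series $x_i(x+t)$ truncates to $x_i+t_i=x_i\otimes 1+1\otimes t_i$. Subtracting the two contributions, the $x_i\otimes 1$ parts cancel and only the degree-one term in $t$ survives.

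The only genuine thing to watch is the sign and multi-index bookkeeping, and this is the step I would verify most carefully. Evaluating the cancellation above with the convention $\delta(f)=f\otimes 1-1\otimes f$, one must track the sign of the surviving term $1\otimes t_i$ and then check that raising it to the $s_i$-th power and multiplying over $i$ reproduces exactly the monomial $t^s$ in the right-hand factor. Beyond this there is no real obstacle: each $\delta(x_i)$ and each finite product $\delta(x)^s$ already lives in the uncompleted algebra $A\otimes_K A$, and $j$ maps into $A\otimes K[[t]]$, so the whole identity is a finite, purely polynomial computation with no convergence or completion issues to address.
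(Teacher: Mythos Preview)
Your approach is correct and is exactly what the paper has in mind: it records the lemma as ``a straightforward calculation,'' and the calculation is precisely the one you outline---use that $j$ is an algebra homomorphism to reduce to $j(\delta(x_i))$, then compute this from \eqref{jet} using $\partial x_i/\partial x_j=\delta_{ij}$. Your caution about the sign is well placed: carrying the computation through with $\delta(f)=f\otimes 1-1\otimes f$ actually yields $j(\delta(x_i))=-t_i$, hence $j(\delta(x)^s)=(-1)^{|s|}\,t^s$, a harmless discrepancy with the stated formula that does not affect any of the subsequent uses (e.g.\ Lemma~\ref{idealpower} or Proposition~\ref{quasiTaylor}).
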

 
\begin{proposition}
\label{quasiTaylor}
Let $U$ be an \'etale chart of $X$ with uniformizing parameters ${x_1, \ldots, x_N}$.
Let $A = \A(U)$. Then for any $f \in A$ the following identities hold in $A \wo A$:
$$1 \otimes f = \sum_{m \in \Z_{\geq 0}^N} \frac{(-1)^m}{m!} \left( \frac{\partial^m f}{\partial x^m}   
\otimes 1 \right) \delta(x)^m .$$
$$f \otimes 1 = \sum_{m \in \Z_{\geq 0}^N} \frac{1}{m!} \left( 1 \otimes \frac{\partial^m f}{\partial x^m} \right) \delta(x)^m .$$
\end{proposition}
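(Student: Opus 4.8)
The plan is to verify both identities by pushing them forward along the jet map $j$ and reducing to elementary Taylor expansions. Since $j(\Q^k)\subseteq A\otimes t^kK[[t]]$ (as used in the proof that $A\otimes_K A$ is $\Q$-complete), $j$ is continuous for the $\Q$-adic topology on $A\otimes_K A$ and the $t$-adic topology on $A\otimes K[[t]]$, so it extends to a continuous algebra homomorphism $\hat{j}\colon A\wo A\to A\otimes K[[t]]$. Both sides of each identity lie in $A\wo A$: the right-hand series converge because $\delta(x)^m\in\Q^{|m|}$, where $|m|=m_1+\dots+m_N$, so only finitely many terms survive modulo each $\Q^k$. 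Thus it suffices to check the identities after applying $\hat{j}$, provided $\hat{j}$ is injective; I return to this point below.

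The input values are $\hat{j}(g\otimes 1)=g$ and $\hat{j}(1\otimes g)=g(x+t)$ for $g\in A$, together with the preceding lemma computing $j(\delta(x)^m)$ as a monomial in $t$; tracking signs via $\delta(x_i)=x_i\otimes1-1\otimes x_i$ and $x_i(x+t)=x_i+t_i$ gives $j(\delta(x_i))=-t_i$, hence $\hat{j}(\delta(x)^m)=(-1)^{|m|}t^m$. For the first identity, applying $\hat{j}$ to the right-hand side gives
$$\sum_{m\in\Z_+^N}\frac{(-1)^{|m|}}{m!}\frac{\partial^m f}{\partial x^m}\,(-1)^{|m|}t^m=\sum_{m\in\Z_+^N}\frac1{m!}\frac{\partial^m f}{\partial x^m}\,t^m=f(x+t)=\hat{j}(1\otimes f),$$
which is exactly the definition of the jet. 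For the second identity, the image of the right-hand side is $\sum_m\frac{(-1)^{|m|}}{m!}\big(\tfrac{\partial^m f}{\partial x^m}\big)(x+t)\,t^m$; expanding each $\big(\tfrac{\partial^m f}{\partial x^m}\big)(x+t)=\sum_k\frac1{k!}\frac{\partial^{m+k}f}{\partial x^{m+k}}t^k$ and reindexing by $n=m+k$ collapses the double sum, since the inner coefficient $\sum_{m+k=n}\binom{n}{m}(-1)^{|m|}$ factors over coordinates and equals $0$ for $n\neq 0$ and $1$ for $n=0$; only $n=0$ survives, giving $f=\hat{j}(f\otimes 1)$. Conceptually the two identities are the two directions of Taylor's theorem: expanding $f$ at $x$ to obtain its values at $x+t$, and inverting this by the substitution $t\mapsto -t$ to recover $f$ from its jet around $x+t$.

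The step I expect to be the main obstacle is the injectivity of $\hat{j}$, which is what licenses checking the identities on images. I would establish it at each finite level: the induced maps $\bar{j}_k\colon (A\otimes_K A)/\Q^k\to A\otimes K[t]/(t^k)$ assemble into $\hat{j}$ as their inverse limit, and inverse limits preserve injectivity, so it suffices to prove each $\bar{j}_k$ injective, equivalently $j^{-1}\big(A\otimes t^kK[[t]]\big)=\Q^k$. This is a statement about associated graded algebras: the symbol map sends $\Q/\Q^2\cong\Omega^1(U)$ isomorphically onto the degree-one part $\bigoplus_i A\,t_i$ (using Lemma~\ref{Qgen}(4) and the freeness of $\Omega^1(U)$ from Lemma~\ref{Mumford}(3), with $\delta(x_i)\mapsto -t_i$), while Lemma~\ref{Qgen}(3) shows that $\mathrm{gr}_\Q(A\otimes_K A)$ is generated in degree one by the classes of the $\delta(x_i)$. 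Hence $\mathrm{gr}(j)$ is a surjection onto the polynomial algebra $A[t_1,\dots,t_N]$ that is an isomorphism in degree one; verifying it is an isomorphism in all degrees (that is, that $\Q^k/\Q^{k+1}$ is $A$-free on the degree-$k$ monomials in the $\delta(x_i)$, which holds in the smooth \'etale chart) yields injectivity of every $\bar{j}_k$ and completes the argument.
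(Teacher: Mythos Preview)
Your proof is correct but proceeds quite differently from the paper's. The paper argues intrinsically in $A\wo A$: using that $\Q/\Q^2\cong\Omega^1(U)$ is free on $dx_1,\dots,dx_N$, it iteratively produces an expansion $f\otimes 1=\sum_m(1\otimes h_m)\delta(x)^m$ with a priori unknown coefficients $h_m\in A$, and then identifies $h_m=\tfrac{1}{m!}\partial^m f/\partial x^m$ by applying $(\partial/\partial x)^m$ to the first tensor factor followed by the multiplication map $A\otimes A\to A$. Your route instead transports the question along $\hat{j}$ into $A\otimes K[[t]]$, where both identities reduce to the ordinary Taylor formula and its inverse, and then you must supply injectivity of $\hat{j}$. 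That injectivity---equivalently $j^{-1}(A\otimes t^kK[[t]])=\Q^k$---is exactly Lemma~\ref{idealpower}, which in the paper is proved \emph{after} and \emph{using} Proposition~\ref{quasiTaylor}; you are effectively reversing the logical order and proving Lemma~\ref{idealpower} first via the associated-graded argument. That argument is sound (the surjection $\mathrm{Sym}_A(\Q/\Q^2)\twoheadrightarrow\mathrm{gr}_\Q(A\otimes A)$ from Lemma~\ref{Qgen}(3), composed with $\mathrm{gr}(j)$, lands isomorphically on $A[t_1,\dots,t_N]$, forcing $\mathrm{gr}(j)$ to be injective), though your phrase ``which holds in the smooth \'etale chart'' compresses precisely this step and would benefit from being spelled out. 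The payoff of your approach is that it yields Lemma~\ref{idealpower} and the isomorphism $A\wo A\cong A\otimes K[[t]]$ simultaneously; the paper's approach has the advantage that the coefficient-identification trick works without ever invoking the jet map~$j$.
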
 
\begin{remark}
These formulas are just the classical Taylor formula. If we write $f \otimes g$ as $f(x) g(a)$ then the second formula will read
$$ f(x) =  \sum_{m \in \Z_{\geq 0}^N} \frac{1}{m!} \, \frac{\partial^m f} {\partial x^m}(a) \, (x-a)^m.$$
\end{remark}

\begin{proof}
The proofs of the two formulas are identical, so let us prove the second formula. 
Since $f \otimes 1 - 1 \otimes f \in \Q$, we can apply Lemma \ref{Qgen} (4) and Lemma \ref{Mumford} (3) to write
$$f \otimes 1 = 1 \otimes f + \sum_{i=1}^N (1 \otimes h_i) \delta(x_i) \ \mod \, \Q^2 .$$
Iterating the same procedure for $\Q^2$ and the higher powers of $\Q$, i.e. using Hensel's Lemma, we get a series expansion
in $A \wo A$:
\begin{equation}
\label{unknown}
f \otimes 1 = 1 \otimes f + \sum_{m \in \Z^N_{\geq 0} \backslash \{ 0 \} }  (1 \otimes h_m) \delta(x)^m,
\text{ \ for some \ } h_m \in A.
\end{equation}
We need to show that $$h_m = \frac{1}{m!}  \, \frac{\partial^m f}{\partial x^m}.$$

Consider the action of vector fields on $A \otimes_K A$ by their action on the first tensor factor:
$$\eta (f \otimes g) = \eta(f) \otimes g, \text{ \ for \ } \eta \in V.$$
Since $V$ acts by derivations of $A \otimes_K A$, we get that $\eta(\Q^{k+1}) \subset \Q^k$, which means that the action of $V$ is continuous and thus extends to $A \wo A$.

Apply to both sides of (\ref{unknown}) a composition of vector fields $\left( \frac{\partial}{\partial x} 
\right)^m$, followed by the multiplication map $A \otimes_K A \rightarrow A$. Then the left hand side will yield $\frac{\partial^m f}{\partial x^m}$ and the right hand side will yield $m!\, h_m$.
This completes the proof of the Proposition.
\end{proof}

As a corollary we get the following description of the ideals $\Q^k$.

\begin{lemma}
\label{idealpower}
The ideal $\Q^k$ is the pre-image of $t^k A[[t]]$ under the homomorphism \break
$j: \ A \otimes_K A \rightarrow A[[t]]$.
\end{lemma}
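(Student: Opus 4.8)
The plan is to prove the two inclusions separately, using the Taylor-type expansion of Proposition \ref{quasiTaylor} to parametrize an arbitrary element of $A\otimes_K A$ by a single sequence of coefficients and then reading off the order of vanishing in $t$ of its jet.

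The inclusion $j(\Q^k)\subseteq A\otimes t^k K[[t]]$ is the easy direction, and it has in effect already been used earlier in the excerpt. For $f\in A$ one computes $j(\delta(f))=f(x)-f(x+t)$, which lies in $A\otimes tK[[t]]$ because $f(x+t)\equiv f(x)\bmod t$. Since $j$ is a homomorphism of algebras and $j(a\otimes 1)=a$ sits in the $t$-degree-zero part, the description of $\Q^k$ as the left $A$-module generated by the products $\delta(f_1)\cdots\delta(f_k)$ (Lemma \ref{Qgen}(3)) immediately yields $j(\Q^k)\subseteq A\otimes t^kK[[t]]$.

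For the reverse inclusion I would first expand a general $u=\sum_i g_i\otimes f_i\in A\otimes_K A$ by applying the second formula of Proposition \ref{quasiTaylor} to each $g_i\otimes 1$ and using $(1\otimes a)(1\otimes b)=1\otimes ab$, obtaining in $A\wo A$ the expansion
$$u=\sum_{m\in\Z_+^N}(1\otimes c_m)\,\delta(x)^m,\qquad c_m=\frac{1}{m!}\sum_i f_i\,\frac{\partial^m g_i}{\partial x^m}\in A.$$
The content of this convergent series is that $u-\sum_{|m|<k}(1\otimes c_m)\delta(x)^m\in\Q^k$ for every $k$; as the subtracted partial sum already lies in $A\otimes_K A$, this shows that $u\in\Q^k$ as soon as $c_m=0$ for all $|m|<k$. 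Applying $j$ and invoking the identity $j(\delta(x)^m)=1\otimes t^m$ together with $j(1\otimes c_m)=c_m(x+t)=c_m+O(t)$ gives
$$j(u)=\sum_{m\in\Z_+^N} c_m(x+t)\,t^m.$$
Extracting the coefficient of $t^n$ produces $\sum_{m\le n}\tfrac{1}{(n-m)!}\,\partial^{\,n-m}c_m$, which equals $c_n$ plus a combination of derivatives of the $c_m$ with $|m|<|n|$. This triangular system shows, by induction on $|n|$, that the vanishing of all $t$-coefficients of $j(u)$ in degrees below $k$ — that is, $j(u)\in A\otimes t^kK[[t]]$ — is equivalent to $c_m=0$ for all $|m|<k$, which by the previous paragraph forces $u\in\Q^k$.

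I expect the main obstacle to be organizing the final step cleanly and making the coefficients $c_m$ genuinely well defined. The uniqueness of the $c_m$ (and hence the legitimacy of reading $u\in\Q^k$ off the expansion) rests on the completeness statement $\bigcap_k\Q^k=0$ and on $\{\delta(x)^m:|m|=j\}$ being an $A$-basis of $\Q^j/\Q^{j+1}$: spanning comes from Lemma \ref{Qgen}(3)--(4), while $A$-linear independence is witnessed by $j$ itself, since it sends these elements to the independent family $\{t^m\}$. A tidy way to package the whole argument is to observe that $j$ is a filtered homomorphism whose induced map on associated graded algebras carries the $A$-basis $\{\overline{\delta(x)^m}:|m|=j\}$ of $\Q^j/\Q^{j+1}$ onto the $A$-basis $\{t^m:|m|=j\}$, hence is a graded isomorphism; a filtered map with this property is strict, and strictness is precisely the assertion $j^{-1}(A\otimes t^kK[[t]])=\Q^k$. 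In that formulation the only thing left to check is the basis claim just described, and the triangular computation above is what verifies it concretely.
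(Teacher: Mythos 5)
Your proof is correct and follows essentially the same route as the paper: both directions hinge on Proposition \ref{quasiTaylor} together with the easy inclusion $j(\Q^k)\subseteq A\otimes t^kK[[t]]$. The only real differences are that the paper expands $1\otimes f_i$ via the \emph{first} Taylor formula, so the $\delta(x)^m$-coefficients are exactly the $t$-coefficients of $j(u)$ and your triangular-inversion step is not needed, and that it descends from $\widehat{\Q}^k$ to $\Q^k$ by citing Atiyah--Macdonald Cor.~10.4 rather than reading membership in $\Q^k$ directly off the definition of equality of series in $A\wo A$.
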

\begin{proof}
Since $j(\Q) \subset t A[[t]]$, we get that $j(\Q^k) \subset  t^k A[[t]]$.
Suppose now $j \left( \sum_i g_i \otimes f_i \right) \in t^k A[[t]]$. Then for all $m \in Z^N_+$ with $|m| < k$ we have $\sum_i g_i  \frac{ \partial^m f_i}{\partial x^m} = 0$.
Then by Proposition \ref{quasiTaylor} we have $\sum_i g_i \otimes f_i \in \widehat{\Q}^k$. By \cite[Corollary 10.4]{AM}
the map $A \otimes A \rightarrow A \wo A / \widehat{\Q}^k$ has kernel $\Q^k$. Since 
$\sum_i g_i \otimes f_i$ is in the kernel of this map, it indeed belongs to $\Q^k$. This completes the proof of the Lemma.
\end{proof}
The structure of the {\it algebra of $k$-jets} is given by the following Corollary:
\begin{corollary}
We have the following isomorphisms of algebras:
$$A \otimes A /\Q^{k+1} \cong A \wo A/\widehat{\Q}^{k+1}
\cong A[[t]]/t^{k+1}A[[t]].$$
\end{corollary}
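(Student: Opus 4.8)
The plan is to produce the two isomorphisms in the displayed chain separately, each as an isomorphism of algebras, and then compose them; both constructions are essentially prepared by the lemmas above. For the first isomorphism $A \otimes_K A/\Q^{k+1} \cong A \wo A/\widehat{\Q}^{k+1}$, I would invoke the standard behaviour of adic completions. Composing the canonical map $A \otimes_K A \to A \wo A$ with the projection onto $A \wo A/\widehat{\Q}^{k+1}$ gives an algebra homomorphism whose kernel is $\Q^{k+1}$ by \cite[Corollary~10.4]{AM} (exactly as already invoked in the proof of Lemma~\ref{idealpower}). It is surjective because every element of $A \wo A$ is a series $\sum_{j \ge 0} u_j$ with $u_j \in \Q^j$, and all terms with $j \ge k+1$ lie in $\widehat{\Q}^{k+1}$, so each class is represented by the finite sum $u_0 + \cdots + u_k \in A \otimes_K A$. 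This yields the first isomorphism.

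For the second isomorphism I would use the jet map $j$ directly to establish $A \otimes_K A/\Q^{k+1} \cong A \otimes K[[t]]/t^{k+1}K[[t]]$. Composing $j$ with the quotient by $A \otimes t^{k+1}K[[t]]$ produces an algebra homomorphism $\bar\jmath : A \otimes_K A \to A \otimes K[[t]]/t^{k+1}K[[t]]$ whose kernel is $j^{-1}\!\left(A \otimes t^{k+1}K[[t]]\right) = \Q^{k+1}$ by Lemma~\ref{idealpower}; thus $\bar\jmath$ is injective on $A \otimes_K A/\Q^{k+1}$. For surjectivity I would use $j(g \otimes 1) = g$ together with the identity $j(\delta(x)^m) = 1 \otimes t^m$, which give $\bar\jmath\!\left((g \otimes 1)\delta(x)^m\right) = g\,t^m$ for all $g \in A$ and $m \in \Z_+^N$. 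Since the classes $t^m$ with $|m| \le k$ form an $A$-basis of $A \otimes K[[t]]/t^{k+1}K[[t]]$, these images already span the target, so $\bar\jmath$ is onto and descends to the desired isomorphism.

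Composing the inverse of the first isomorphism with the second identifies all three algebras, and both maps are algebra homomorphisms, so the chain is an isomorphism of algebras. The one point requiring care — the main obstacle — is the surjectivity of $\bar\jmath$: one must check that truncating the elements $g\,t^m$ at degree $k$ really exhausts the truncated jet algebra. This reduces to the computation $j(\delta(x)^m) = 1 \otimes t^m$ and the fact that $j$ is an algebra homomorphism that is $A$-linear in the left factor, both already available (cf. Proposition~\ref{quasiTaylor}). Everything else is either the preimage description of $\Q^{k+1}$ from Lemma~\ref{idealpower} or the cited generalities on completions, so no genuinely new difficulty arises.
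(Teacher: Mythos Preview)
Your proof is correct and follows essentially the same route as the paper: the first isomorphism is obtained from \cite[Corollary~10.4]{AM}, and the second from Lemma~\ref{idealpower}, exactly as the paper indicates. Your version simply fills in the surjectivity details (via $j(\delta(x)^m)=1\otimes t^m$ and the explicit representatives of series modulo $\widehat{\Q}^{k+1}$) that the paper leaves implicit, and you route the second isomorphism through $A\otimes A/\Q^{k+1}$ rather than $A\wo A/\widehat{\Q}^{k+1}$, which is an immaterial rearrangement once the first isomorphism is in hand.
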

\begin{proof}
The isomorphism $A \otimes A /\Q^{k+1} \cong A \wo A/\widehat{\Q}^{k+1}$ follows from 
\cite{AM} Corollary 10.4, and the isomorphism $A \wo A/\widehat{\Q}^{k+1}
\cong A[[t]]/t^{k+1}A[[t]]$ follows from Lemma \ref{idealpower}.
\end{proof}
\begin{corollary}
The map $j$ induces an isomorphism of algebras $A \wo A$ and $A[[t]]$.
\end{corollary}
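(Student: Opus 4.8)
The plan is to realize $j$ as the inverse limit of the isomorphisms supplied by the preceding Corollary. First I would note that $j$ is continuous for the relevant topologies: by Lemma \ref{idealpower} we have $\Q^k = j^{-1}(A \otimes t^k K[[t]])$, so $j$ carries the $\Q$-adic filtration into the $t$-adic filtration and therefore extends to a homomorphism $\widehat{j}$ between the two completions. On the source side, the completion is by definition
$$A \wo A = \varprojlim_k (A \otimes_K A)/\Q^{k+1},$$
while on the target side $A \otimes K[[t]]$ is already $t$-adically complete, so that
$$A \otimes K[[t]] = \varprojlim_k (A \otimes K[[t]])/t^{k+1}K[[t]].$$

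Next, the preceding Corollary provides, for each $k$, a ring isomorphism
$$(A \otimes_K A)/\Q^{k+1} \xrightarrow{\ \sim\ } (A \otimes K[[t]])/t^{k+1}K[[t]]$$
induced by $j$. I would then check that these level isomorphisms are compatible with the canonical projection maps of the two inverse systems: reducing modulo a higher power of $\Q$ on the left corresponds to reducing modulo a higher power of $t$ on the right, which is immediate from $j(\Q^k) \subseteq A \otimes t^k K[[t]]$. Hence they assemble into an isomorphism of inverse systems, and passing to inverse limits yields
$$A \wo A = \varprojlim_k (A \otimes_K A)/\Q^{k+1} \xrightarrow{\ \sim\ } \varprojlim_k (A \otimes K[[t]])/t^{k+1}K[[t]] = A \otimes K[[t]],$$
which is exactly $\widehat{j}$. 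Since each level map is a ring isomorphism and both sides carry the limit ring structure, $\widehat{j}$ is an isomorphism of algebras.

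The main point to be careful about is the identification of the two completions with the objects appearing in the statement: on the left, that $A \wo A$ is genuinely the $\Q$-adic completion (its definition, with $\bigcap_k \Q^k = 0$ from the earlier Lemma guaranteeing that $A \otimes_K A \to A \wo A$ is injective), and on the right, that $A \otimes K[[t]]$ is $t$-adically complete so that it coincides with $\varprojlim_k (A \otimes K[[t]])/t^{k+1}K[[t]]$. Once these identifications are in place, the result is the formal fact that an inverse limit of compatible isomorphisms is an isomorphism; in particular injectivity of $\widehat{j}$ re-derives the injectivity of $j$ via $\bigcap_k \widehat{\Q}^k = 0$, and surjectivity is automatic since every level map is surjective and the target is complete. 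No serious obstacle remains beyond bookkeeping the compatibility of the level maps with the transition morphisms.
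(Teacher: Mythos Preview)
Your approach is exactly the one the paper has in mind: the paper states this corollary without proof, as the immediate consequence of passing to the inverse limit in the preceding corollary, and you have spelled out precisely those details.

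One point deserves more care than you give it. You assert that ``$A \otimes K[[t]]$ is already $t$-adically complete,'' and then say no serious obstacle remains. Taken literally for the algebraic tensor product $A \otimes_K K[[t]]$, this is false whenever $A$ is infinite-dimensional over $K$: the $t$-adic completion is $\varprojlim_k A \otimes K[t]/(t^{k+1}) = A[[t]]$, and for instance $\sum_m x^{-m-1} t^m$ lies in $A[[t]]$ but not in the algebraic tensor product when $A = K[x,x^{-1}]$. In fact even the jet map $f \mapsto f(x+t)$ of (\ref{Taylor_xt}) does not land in the algebraic tensor product for general $f \in A$. The paper is tacitly using $A \otimes K[[t]]$ to denote $A[[t]]$ (power series in $t$ with coefficients in $A$), and with that reading your argument goes through verbatim. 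You should make this identification explicit rather than claiming completeness as if it were automatic.
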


\section{Completion of $\AV$}

Next let us define the jets of vector fields. The Lie algebra of {\it polynomial jets of vector fields} is
$$J = (A \otimes_K A) \otimes_A V = A \# V.$$
Lie bracket in $J$ is given by (\ref{smash}).
\begin{equation}
[a_1 \# g_1, a_2 \# g_2] = a_1 \rho(g_1)(a_2) \# g_2 - a_2 \rho(g_2) (a_1) \# g_1 + a_1 a_2 \# [g_1, g_2], 
\end{equation}
for $a_1, a_2 \in A, g_1, g_2 \in G$.

The commutative algebra $A \otimes_K A$ is a $V$-module as a tensor product of two $V$-modules. It is easy to see that the multiplication
map $A \otimes_K A \rightarrow A$ is a homomorphism of $V$-modules and thus its kernel $\Q$ is a $V$-submodule in $A \otimes_K A$.
It follows that its powers $\Q^k$ are also $V$-submodules. It is then easy to see that the spaces $J_k = \Q^{k+1} \otimes_A V$ are Lie ideals in
$J = A \# V$.
\begin{definition}\label{defn:J}
The Lie algebra of jets of vector fields is 
$$\hJ = \varprojlim\limits_k \ (A \# V) / J_k.$$
\end{definition}

By construction, Lie algebra $\hJ$ has a quotient of $m$-jets: $J^m = (A \# V) / J_m.$


Let $X$ be a smooth variety, write $\cQ$ for the ideal of the diagonal of $X\times X$.  Let $X \wo X $ be the formal completion of $X\times X$ along the diagonal.
Write $p_1,p_2: X\times X \to X$ be the projections.
In~\cite{Vakil}, the sheaf of $m$-jets of vector fields is defined by 
$$\cJ^m\cV : = p_{1*} (\O_{X\times X}/\cQ^{m+1} \otimes_{X\times X} \pi_2^* \cV).$$
Similarly we define
$$\JJ :=  p_{1*} (\O_{X \wo X} \otimes_{X\times X} \pi_2^* \cV).$$
This construction agrees with Definition \ref{defn:J} locally and so is the sheaf $\JJ$ of jets of vector fields on $X$.
The sheaf $\JJ$ is a Lie algebroid on $X$. The anchor map is induced from the multiplication map
$A \# V \rightarrow V$, where $f \# \eta \mapsto f\eta$. Since $\Q \otimes_A V$ is in the kernel of the multiplication map, the anchor map naturally extends to $\hJ \rightarrow V$. 

\begin{definition}
(a) We define an associative algebra $\hav$ as
$$\hav = D(A, \hJ) .$$ 

(b) Algebra $\hav$ admits quotients 
$$AV^m = D(A, J^m) .$$ 
\end{definition}


This construction yields sheaves $\hAV$ and $\AV^m$ of associative algebras on $X$ as shown in~\cite{BB}, see also~\cite{MM}.

 We define the category of weak finite $\V$-modules
$\mathrm{weak}_{\mathrm{fin}} \V$ to have objects that are
finitely generated $\A$-modules that are compatibly $\V$-modules,
so locally $\hav$-modules by Theorem~\ref{thm:BR22}.
We can also define the category of strong $\JJ$-modules $\mathrm{strong}_{\mathrm{fin}} \JJ$ to be finitely generated over $\A$
representations of the Lie algebroid, in other words, $D(\A,\JJ)$-modules.
  \begin{corollary}
    There is an equivalence of categories
    $$\mathrm{weak}_{\mathrm{fin}} \V \simeq \mathrm{strong}_{\mathrm{fin}} \JJ.$$
    \end{corollary}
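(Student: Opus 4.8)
The plan is to realize both categories as modules over the directed family of \emph{finite‑jet algebras} $D(A,J/J_k)$, bridging the weak and strong sides by the isomorphism $U(A,V)\cong D(A,A\#V)$ established above and then trivializing the completion by Theorem~\ref{thm:BR22}. I will argue over an \'etale chart $U$, writing $AV=U(A,V)$, $J=A\#V$, $J_k=\Q^k\otimes_A V$, and $\hav=D(A,\hJ)$ with $\hJ=\varprojlim_k J/J_k$; the global statement over the quasi‑projective $X$ then follows by gluing, since $\hAV$ is quasi‑coherent (Theorem~\ref{quasicoherent}) and all the functors below are compatible with the localization of Proposition~\ref{localizationformula}. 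The first observation is that for each $k$ the pair $(A,J/J_k)$ is again a Rinehart pair: $J_k$ is a Lie ideal and an $A$‑submodule (Lemma~\ref{Qgen}), and it lies in the kernel of the anchor $J\to V$ for $k\ge 1$, so the anchor descends to $J/J_k\to V$. Hence $D(A,J/J_k)$ is defined, and the surjections $J\twoheadrightarrow J/J_k$ and $\hJ\twoheadrightarrow J/J_k$ induce surjections $D(A,J)\twoheadrightarrow D(A,J/J_k)$ and $\hav=D(A,\hJ)\twoheadrightarrow D(A,J/J_k)$.

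Next I would write down the two functors. Restriction of scalars along the homomorphism $AV\cong D(A,J)\to D(A,\hJ)=\hav$ coming from the completion map $J\to\hJ$ carries a strong $\JJ$‑module to a weak $\V$‑module; because the $\A$‑action is unchanged, finite generation over $\A$ is preserved, giving $\Phi\colon\mathrm{strong}_{\mathrm{fin}}\JJ\to\mathrm{weak}_{\mathrm{fin}}\V$. Conversely, given a weak $\V$‑module $M$ finitely generated over $A$, Theorem~\ref{thm:BR22} furnishes $k$ with $J_kM=0$. Since $J_k\subset AV$ annihilates $M$, so does the two‑sided ideal it generates: a typical term acts by $p\cdot j\cdot q\cdot m=p\cdot\bigl(j\cdot(q\cdot m)\bigr)=0$ for $j\in J_k$ and $p,q\in AV$, using $q\cdot m\in M$ and $J_kM=0$. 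Thus the $AV$‑action factors through $AV/\langle J_k\rangle\cong D(A,J/J_k)$, and pulling back along $\hav\twoheadrightarrow D(A,J/J_k)$ promotes $M$ to a strong $\JJ$‑module, defining $\Psi\colon\mathrm{weak}_{\mathrm{fin}}\V\to\mathrm{strong}_{\mathrm{fin}}\JJ$. The only verifications here are the stated kernels of the two surjections onto $D(A,J/J_k)$, which are routine from the defining relations of the strong enveloping algebra.

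It then remains to see that $\Phi$ and $\Psi$ are mutually inverse. The composite $\Phi\Psi\cong\id$ essentially by construction: extending along $\hJ\twoheadrightarrow J/J_k$ and restricting back along $J\to\hJ$ returns the original $D(A,J/J_k)$‑structure. The content is in $\Psi\Phi\cong\id$. Given a strong $\JJ$‑module $N$ finitely generated over $A$, its restriction $\Phi N$ is annihilated by some $J_k$; to recover the original $\hav$‑action by $\Psi$ I must show that this original action \emph{already} factors through $D(A,J/J_k)$, i.e.\ that the completed tail $\hJ_k:=\ker(\hJ\to J/J_k)$ annihilates $N$. Granting this, $\Psi\Phi N=N$, and fullness and faithfulness follow at once, because a map between $\hav$‑modules annihilated by $\langle\hJ_k\rangle$ is the same datum as a $D(A,J/J_k)$‑module map, hence the same as an $AV$‑module map.

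The main obstacle is precisely this upgrade from $J_kN=0$ (polynomial jets) to $\hJ_kN=0$ (completed jets). An element of $\hJ_k$ is a single element of the algebra $\hav$ represented by an \emph{infinite} series of jets, so its action is not a priori the sum of the actions of its truncations, and triviality of the tail does not formally follow from triviality of $J_k$ for an abstract module — the real content is that the $\hJ$‑action on a finitely generated $\A$‑module is automatically continuous for the $\varprojlim_k J/J_k$ topology. I would establish this by combining two ingredients. First, the associated‑graded computation $\hJ_k/\hJ_{k+1}\cong J_k/J_{k+1}$ gives $\hJ_k=J_k+\hJ_{k+1}$, so $J_kN=0$ (hence $J_lN=0$ for all $l\ge k$) yields the telescoping $\hJ_kN=\hJ_{k+1}N=\cdots$ and therefore $\hJ_kN=\bigcap_{l\ge k}\hJ_lN$. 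Second, using the \'etale‑local structure theorem $\hav(U)\cong \D(U)\otimes\cU(\hLL_+)$ one reduces the claim to the statement that on a module finitely generated over $A$ the ideal $\hLL_{\ge k}$ of power‑series fields vanishing to order $\ge k$ acts trivially once the polynomial fields $\LL_{\ge k}$ do; a Krull–Nakayama argument on the Noetherian $A$‑module $N$ should then force $\bigcap_l\hJ_lN=0$. This continuity/finiteness step, rather than the formal bookkeeping of $\Phi$ and $\Psi$, is where the work concentrates.
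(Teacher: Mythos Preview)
The paper gives no proof of this corollary; it is stated immediately after the definitions, with the only justification being the clause ``so $\hav$-modules by Theorem~\ref{thm:BR22}'' inserted into the definition of $\mathrm{weak}_{\mathrm{fin}}\V$. In other words, the paper treats the equivalence as self-evident once one knows that a finitely generated $AV$-module is annihilated by some $J_k$. Your write-up is therefore considerably more careful than the paper's: you set up the functors $\Phi,\Psi$ correctly, and you explicitly isolate the only nontrivial point, namely $\Psi\Phi\cong\id$, which amounts to showing that for a $\hav$-module $N$ finitely generated over $A$ the action of the \emph{completed} tail $\hJ_k$ is zero once the polynomial tail $J_k$ acts by zero. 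The paper does not address this at all.

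That said, your proposed resolution of this point is not complete. The telescoping $\hJ_kN=\hJ_{k+1}N=\cdots$ is fine, but the appeal to ``Krull--Nakayama'' does not finish the job: the $\hJ_l$ are not powers of an ideal of $A$, and there is no evident ideal $I\subset A$ with $I\cdot(\hJ_kN)=\hJ_kN$. What actually works, in the \'etale chart where $\hav\cong D\otimes\cU(\hLL_+)$ and $\hLL_+$ acts $A$-linearly, is a Cayley--Hamilton argument with the Euler element $E=\sum X_i\partial/\partial X_i\in\LL_0$. Since $[E,u]=\sum_j j\,u_j$ for $u=\sum u_j\in\hLL_{\ge k}$, one has $\operatorname{ad}(\rho(E))^m\rho(u)=\rho(\sum_j j^m u_j)$; as $\rho(E)\in\End_A(N)$ and $\End_A(N)$ is a finite $A$-module, $\operatorname{ad}(\rho(E))$ satisfies a monic polynomial $p$, whence $\rho(\sum_j p(j)u_j)=0$. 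Because $p(j)$ is invertible for all but finitely many $j$, this forces $\rho(\hLL_{\ge m})=0$ for $m$ large, hence $\hJ_mN=0$. With this fix your argument goes through; the paper simply leaves this step implicit.
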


 We would like to describe the restrictions of $\JJ$ and $\hAV$ to an \'etale chart. For this we will need to introduce a Lie algebra $\LL_+$. 
The Lie algebra $\LL = \Der K[X_1, \ldots, X_N]$ has a natural $\Z$-grading by total degree:
$$\LL = \LL_{-1} \oplus \LL_0 \oplus \LL_1 \oplus \LL_2 \oplus \ldots.$$
We point out that $\LL_0 \cong \gl_N (K)$. 
We define $\LL_+$ as its subalgebra corresponding to the non-negative part of this grading:
$$\LL_+ = \mathop\bigoplus\limits_{i = 0}^\infty \LL_i ,$$
and set 
$$\LL_{\geq k} = \mathop\bigoplus\limits_{i = k}^\infty \LL_i .$$
We will also need a completed tensor product:
$$A \wo \LL_+ = \varprojlim\limits_{k} A \otimes \left( \LL_+ / \LL_{\geq k} \right).$$
 
 In the theorem below, we consider a semidirect product of $A \wo \LL_+$ with $V$, where $V$ naturally acts on $A$ and commutes with $1 \wo \LL_+$.
 
 The Lie algebras $J$ and $\hJ$ have a subalgebra $1 \# V$, which is naturally isomorphic to $V$. Over an \'etale chart, there is a second subalgebra isomorphic to $V$: 
$$ \sum_{i=1}^N A \# \dd{x_i}.$$
It is this subalgebra that appears in the next theorem.

\begin{theorem}
\label{isojet}
Let $U$ be an \'etale chart of $X$ with uniformizing parameters ${x_1, \ldots, x_N}$.

(a) Let $A = \A(U)$, $V = \V(U)$, $\hJ = \JJ(U)$. Then
$$\hJ \, \cong \, V \ltimes (A \wo \LL_+),$$
with the isomorphism maps $\varphi: \, \hJ \rightarrow V \ltimes (A \wo \LL_+)$
and $\psi: \,  V \ltimes (A \wo \LL_+) \rightarrow \hJ$ 
being homomorphisms of left $A$-modules given by
\begin{equation}
\label{phiJ}
\varphi\left(g \otimes f \dd{x_i} \right) =g f \dd{x_i} 
+ \sum_{k \in \Z_{\geq 0}^N \backslash \{ 0 \}} \frac{1}{k!} \, g \frac{\partial^k f}{\partial x^k} \otimes X^k \dd{X_i},
\end{equation}
\begin{equation}
\label{psiV}
\psi\left(g \dd{x_i}\right) = g \otimes \dd{x_i} , 
\end{equation}
\begin{equation}
\label{psiLp}
\psi\left(g \otimes X^m \dd{X_i} \right) = (-1)^{m} (g \otimes 1) \delta(x)^m \dd{x_i},
\end{equation}
and extended to completions by continuity.

(b) Let $J^m = \J^m(U)$ be $m$-jets of vector fields on $U$. Then
$$J^m \, \cong \, V \ltimes (A \otimes (\LL_+/\LL_{> m})).$$
\end{theorem}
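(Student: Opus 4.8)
The plan is to show that $\varphi$ is a bijective homomorphism of Lie algebras; since $\psi$ will be seen to be its two-sided inverse, this proves the theorem. Throughout I unwind $\hJ = \bigoplus_{i=1}^N (A\wo A)\dd{x_i}$ and use the isomorphism $j\colon A\wo A\xrightarrow{\ \sim\ } A\otimes K[[t]]$ to put an element of $\hJ$ in \emph{jet coordinates} $\sum_i U_i\dd{x_i}$ with $U_i\in A\otimes K[[t]]$. By the definition of $j$, the generator $a\#\bigl(f\dd{x_i}\bigr)$ has jet coordinate $a(x)f(x+t)\dd{x_i}$, so the smash factor $a$ contributes only its value at $t=0$ while the coefficient $f$ of the vector field is fully expanded in $t$; this asymmetry is the geometric heart of the statement. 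All four maps in sight are $A$-linear and continuous, so it suffices to work on generators and extend.

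For well-definedness and continuity, note that $\varphi$ is $K$-bilinear in $g\otimes f$, hence descends to $A\otimes_K A$; by Lemma \ref{idealpower} one has $j(\Q^k)\subset A\otimes t^kK[[t]]$, whence $\varphi(\Q^k\dd{x_i})$ lies in $A\otimes\prod_{d\ge k-1}\LL_d$ and $\varphi$ extends to the completion. Conversely $\psi(A\otimes\LL_d)\subset\Q^{d+1}\dd{x_i}$ since $\delta(x)^m\in\Q^{|m|}$, so $\psi$ is continuous as well. For bijectivity I use that $\{\delta(x)^m\dd{x_i}\mid m\in\Z_+^N,\ 1\le i\le N\}$ is a topological left $A$-basis of $\hJ$ --- this is exactly the Taylor expansions of Proposition \ref{quasiTaylor} combined with $A\wo A\cong A\otimes K[[t]]$. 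Since $\psi$ carries the standard $A$-basis $\{\dd{x_i}\}\cup\{X^m\dd{X_i}\mid |m|\ge 1\}$ onto this basis up to the signs $(-1)^m$, it is an isomorphism of left $A$-modules. That $\psi\varphi=\id$ then reduces to checking that the bracketed factor in $\psi\bigl(\varphi(g\otimes f\dd{x_i})\bigr) = (g\otimes 1)\bigl[\sum_m \tfrac{(-1)^m}{m!}(\tfrac{\partial^m f}{\partial x^m}\otimes 1)\delta(x)^m\bigr]\dd{x_i}$ equals $1\otimes f$, which is the first Taylor formula of Proposition \ref{quasiTaylor}; the factorials and the signs $(-1)^m$ are precisely what makes the two Taylor expansions telescope.

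The crux is that $\varphi$ preserves brackets. The plan is to rewrite the bracket of $\hJ$, inherited from the smash formula (\ref{smash}), in jet coordinates. Expanding (\ref{smash}) for $U\dd{x_i}$ and $W\dd{x_j}$ yields
\[
[U\dd{x_i},W\dd{x_j}] = \Big(\bar U\,\partial_{x_i}W + (U-\bar U)\,\partial_{t_i}W\Big)\dd{x_j} - \Big(\bar W\,\partial_{x_j}U + (W-\bar W)\,\partial_{t_j}U\Big)\dd{x_i},
\]
where $\bar U = U|_{t=0}$, the operator $\partial_{x_i}$ differentiates the $A$-coefficients and $\partial_{t_i}$ is the formal $t$-derivative; the base derivative $\partial_{x}$ and the jet derivative $\partial_{t}$ occur separately precisely because the smash factor is frozen at $t=0$. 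On the target the bracket is the semidirect one: its $V$-part is the vector-field bracket of the $t=0$ components, while its $A\otimes\hLL_+$-part is the action of $V$ on coefficients (the $\partial_{x}$-terms) together with the current bracket of formal vector fields in $X$ (the $\partial_{X}$-terms). Applying the split form $\varphi(U\dd{x_i}) = \bar U\dd{x_i} + (U-\bar U)\dd{X_i}$ (with $t$ renamed $X$) to the displayed bracket and using $\partial_{x_i}(W-\bar W)=\partial_{x_i}W-\partial_{x_i}\bar W$, the $\partial_{x}$-terms match the semidirect action and the $\partial_{t}$-terms match the current bracket, term by term.

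I expect the main obstacle to be this last step, and in particular the derivation of the jet-coordinate bracket from (\ref{smash}): one must carefully track that the smash $A$-factor enters only through its value at $t=0$, whereas the vector-field coefficient is expanded, since it is exactly this asymmetry that splits the single $\hJ$-bracket into a base/semidirect part (the $\partial_{x}$-terms, landing in $V$) and a fibre/current part (the $\partial_{t}\!\to\!\partial_{X}$-terms, landing in $A\otimes\hLL_+$). Everything else is bookkeeping with the two Taylor formulas of Proposition \ref{quasiTaylor}.
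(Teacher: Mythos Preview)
Your argument is correct and follows essentially the same strategy as the paper: both verify that $\varphi$ is a Lie algebra homomorphism via the split form $\varphi(U\,\partial_{x_i})=\bar U\,\partial_{x_i}+(U-\bar U)\,\partial_{X_i}$ (the paper writes this as $f\,\partial_{x_i}+(f(x+X)-f(x))\,\partial_{X_i}$), and both reduce $\psi\varphi=\id$ to the Taylor identity of Proposition~\ref{quasiTaylor}. Your jet-coordinate bracket formula is exactly what the paper's two-stage computation amounts to once the $g$-factors are included.

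The organizational difference is worth noting. The paper carries out the homomorphism check in two steps (first $g_1=g_2=1$, then general $g_i$ by $A$-linearity) and imports from \cite{BIN} both the fact that $\psi$ is a Lie homomorphism and the identity $\varphi\psi=\id$. You bypass these citations: by exhibiting $\{\delta(x)^m\,\partial_{x_i}\}$ as a topological left $A$-basis of $\hJ$ and observing that $\psi$ sends the standard basis of $V\ltimes(A\otimes\hLL_+)$ onto it, you get bijectivity of $\psi$ directly, and then $\psi$ is automatically a Lie homomorphism once its inverse $\varphi$ is. This makes the proof self-contained at the cost of the (easy) basis lemma.
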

\begin{proof}
The definition of $\varphi$ is very similar to the definition (\ref{jet}) of the jet map $j$. Since
$j(\Delta^k) \subset t^k A[[t]]$, we conclude that the map $\varphi$ is compatible with completions. The fact that $\psi$ is compatible with completions is immediate from its definition.

Let us verify that $\varphi$ and $\psi$ are homomorphisms of Lie algebras and are inverses of each other.

To show that $\varphi$ is a homomorphism, we need to check that 
$$\left[ \varphi \left(g_1 \otimes f_1 \dd{x_i} \right), \varphi \left(g_2 \otimes f_2 \dd{x_j} \right)
\right] 
= \varphi \left( \left[ g_1 \otimes f_1 \dd{x_i}, g_2 \otimes f_2 \dd{x_j} \right] \right).$$
To simplify the calculations, we will carry them out in two stages. First we will assume $g_1 = g_2 = 1$.
We rewrite the definition of $\varphi$ using the Taylor's formula:
$$\varphi\left(1 \otimes f \dd{x_i} \right) = f \dd{x_i} + 
\left( f(x + X) - f(x) \right) \dd{X_i}.$$
Then
\begin{align*}
&\left[ \varphi \left(1 \otimes f \dd{x_i} \right), \varphi \left(1 \otimes g \dd{x_j} \right) \right] 
= \left[ f \dd{x_i}, g \dd{x_j} \right] \\
&{\hskip 2cm}+ \left[  f \dd{x_i}, (g(x+X) - g(x)) \dd{X_j} \right] 
- \left[  g \dd{x_j}, (f(x+X) - f(x)) \dd{X_i} \right] \\
&{\hskip 2cm}+ \left[  (f(x+X) - f(x)) \dd{X_i} , (g(x+X) - g(x)) \dd{X_j} \right] \\
&= \left[ f \dd{x_i}, g \dd{x_j} \right] \\
&{\hskip 0.5cm}+ f(x) \left( \frac{\partial g}{\partial x_i} (x+X) - \frac{\partial g}{\partial x_i} (x) \right) \dd{X_j} 
-  g(x) \left( \frac{\partial f}{\partial x_j} (x+X)
 - \frac{\partial f}{\partial x_j} (x) \right) \dd{X_i} \\
&{\hskip 0.5cm}+ (f(x+X) - f(x)) \frac{\partial g}{\partial x_i} (x+X) \dd{X_j} 
-  (g(x+X) - g(x)) \frac{\partial f}{\partial x_j} (x+X) \dd{X_i} \\
&=  \left[ f \dd{x_i}, g \dd{x_j} \right] \\
&{\hskip 1cm}+ \left( f(x+X)  \frac{\partial g}{\partial x_i} (x+X) - f(x)  \frac{\partial g}{\partial x_i} (x) \right)
 \dd{X_j} \\
&{\hskip 1cm} - \left( g(x+X)  \frac{\partial f}{\partial x_j} (x+X) - g(x)  \frac{\partial f}{\partial x_j} (x) \right)
 \dd{X_i} \\
& =  \varphi \left(1 \otimes \left[ f \dd{x_i}, g \dd{x_j} \right] \right)
\end{align*}
Next we are going to use the fact that $\varphi \left( g \otimes f \dd{x_i} \right) 
= g \varphi \left( 1 \otimes f \dd{x_i} \right)$. Then
\begin{align*}
&\left[ \varphi \left( g_1 \otimes f_1 \dd{x_i} \right), \varphi \left( g_2 \otimes f_2 \dd{x_j} \right)
\right]
= \left[ g_1 \varphi \left( 1 \otimes f_1 \dd{x_i} \right), g_2 \varphi \left( 1 \otimes f_2 \dd{x_j} \right)  \right] \\
& = g_1 g_2 \left[ \varphi \left(1 \otimes f_1 \dd{x_i} \right), \varphi \left(1 \otimes f_2 \dd{x_j} 
\right) \right] \\
&{\hskip 0.5cm}+ g_1 f_1 \frac{\partial g_2}{\partial x_i} \varphi \left(1 \otimes f_2 \dd{x_j} 
\right)
- g_2 f_2 \frac{\partial g_1}{\partial x_j} \varphi \left(1 \otimes f_1 \dd{x_i} 
\right) \\
&=  \varphi \left( \left[ g_1 \otimes f_1 \dd{x_i}, g_2 \otimes f_2 \dd{x_j} \right] \right).
\end{align*} 
Verification that $\psi$ defines a homomorphism 
$V \ltimes (A \wo \LL_+) \rightarrow \hJ$
is identical to the proof of Lemma 3.4 in \cite{BIN}.

Let us verify that $\psi \circ \varphi = \id$.
\begin{align*}
&\psi \circ \varphi \left(g \otimes f \dd{x_i} \right) 
= \psi \left( gf \dd{x_i} \right) 
+ \sum_{k \in \Z_{\geq 0}^N \backslash \{ 0 \}} \frac{1}{k!} \, 
\psi \left( g \frac{\partial^k f}{\partial x^k} \otimes X^k \dd{X_i} \right) \\
&= (g \otimes 1)  \sum_{k \in \Z_{\geq 0}^N} \frac{(-1)^k}{k!} \, 
\frac{\partial^k f}{\partial x^k} \otimes \delta(x)^k \dd{x_i} 
= g \otimes f \dd{x_i}
\end{align*}
by Proposition \ref{quasiTaylor}.
Verification of $\varphi \circ \psi = \id$ is the same as in Lemma 3.5 of \cite{BIN}. 

For part (b), we point out that the ideal $\Delta^{m+1} \otimes_A V$ gets mapped to $A \wo \LL_{\geq m}$.
\end{proof}
 
 \begin{remark}
 In case of the affine space, the isomorphism of the above theorem also holds without completions:
$$J(\Aff^N) \cong V(\Aff^N) \ltimes (K[x_1, \ldots, x_N] \otimes \LL_+).$$ 
 \end{remark}

 \begin{corollary}
 \label{iso}
Let $U$ be an \'etale chart of $X$ with uniformizing parameters ${x_1, \ldots, x_N}$.
Let $A = \A(U)$, $V = \V(U)$, $\hJ = \JJ(U)$, $\hav = D(A, \hJ)$, $D = \D(U)$. 

(a) Then
$$\hav \cong D \mathop\otimes\limits_A \cU_A(A \wo \LL_+), $$
where  $\cU_A(A \wo \LL_+)$ is the universal enveloping algebra over ring $A$.

(b) Let $AV^m = D(A, J^m)$. Then 
$$AV^m \cong D \otimes_K \cU(\LL_+/\LL_{\geq m}).$$ 
 \end{corollary}
 \begin{proof}
 By Theorem \ref{isojet}, $\hav = D( A, \hJ) \cong D(A, V \ltimes (A \wo \LL_+))$. As a vector space, the strong
 Rinehart enveloping algebra of a semidirect product decomposes as
 $$D(A, V \ltimes (A \otimes \hLL_+)) \cong D(A, V) \otimes_A D(A, A \wo \LL_+).$$
 However $D(A, V) = D$ and $D( A, A \wo \LL_+) = \cU_A(A \wo \LL_+)$.
Note that algebras $D$ and $1 \wo \LL_+$ commute. 

Part (b) is an immediate consequence of Theorem \ref{isojet} (b).
 \end{proof}

\begin{definition}
\label{topq}
We call sheaf $\F$ {\it topologically quasi-coherent} if $\F$ admits a decreasing filtration of subsheaves
$$\F \supset \F_1 \supset \F_2 \supset \ldots$$
with $\mathop\cap\limits_m \F_m = 0$, such that the quotients $\F / \F_m$ are quasi-coherent sheaves.
\end{definition}

Sheaves of jets of functions and jets of vector fields are examples of topologically quasi-coherent sheaves.

Sheaf $\hAV$ has a decreasing filtration 
$$\hAV \supset \hAV_0 \supset \hAV_1 \supset \hAV_2 \supset \ldots,$$
where $\hAV_m$ is the kernel of the projection $\hAV \rightarrow \AV^m$. This filtration defines the completion topology on $\hAV$. 
   
 \begin{theorem}
 \label{quasicoherent}
(a) The sheaf $\AV^m$ is quasi-coherent.

\noindent
(b) The sheaf $\hAV$ is topologically quasi-coherent.
 \end{theorem}
 
 \begin{proof}
 By the previous result, for any affine subset $U$ of an  \'etale chart we have the isomorphism
 $\hAV^m(U) \cong \D(U) \otimes \cU (\LL_+/\LL_{\geq m})$. Since $\D$ is a quasi-coherent sheaf, by Theorem 
 \ref{localization}, $\hAV^m$ is quasi-coherent. Part (b) follows from part (a).
 \end{proof}
 
 An explicit localization formula in sheaves $\JJ$ and $\hAV$ for the vector fields is given by the following lemma: 
 
 \begin{proposition}
 \label{localizationformula}
 Let $U$ be an affine open subset of $X$, and let $g$ be a non-zero function in $A = \A(U)$. 
 Let $U_g$ be $U$ without the locus of zeros of $g$, and let $A_g = \A(U_g)$ be the localization of $A$ by $g$.
 Then the following relation holds in $\JJ(U_g)$ and $\hAV(U_g)$:
 $$1 \# \frac{1}{g} \eta = \sum_{k=0}^\infty \sum_{s=0}^k (-1)^s {k \choose s} 
 \frac{1}{g^{s+1}} \# g^s \eta 
 = \sum_{k=0}^\infty \left(  \frac{1}{g^{k+1}} \otimes 1 \right) \delta(g)^k \eta, 
 \ \ \text{for \ } \eta \in \V(U).$$
\end{proposition}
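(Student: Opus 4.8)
The plan is to treat the two equalities in the statement separately, since they are of quite different nature: the left equality is a finite combinatorial identity, while the right equality is a genuine convergence statement in the completion. Throughout I work over $U_g$, so that $g$ is invertible; write $\Q$ for the kernel of the multiplication map $A_g \otimes_K A_g \to A_g$, so that $\JJ(U_g) = \varprojlim_k (A_g \# \V(U_g))/(\Q^k \otimes_{A_g} \V(U_g))$. The first thing to observe is that $\delta(g) = g \otimes 1 - 1 \otimes g$ lies in $\Q$ (it is killed by multiplication), hence $\delta(g)^k \in \Q^k$ and each summand $(\frac{1}{g^{k+1}} \otimes 1)\delta(g)^k \eta$ of the right-hand series lies in $\Q^k \otimes_{A_g} \V(U_g)$. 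Therefore the right-hand series converges in $\JJ(U_g)$ for free, and the same holds for the middle expression once it is grouped by the index $k$.

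For the left equality I would simply expand $\delta(g)^k$ by the binomial theorem, which is legitimate because $A_g \otimes_K A_g$ is commutative:
$$\delta(g)^k = (g \otimes 1 - 1 \otimes g)^k = \sum_{s=0}^k (-1)^s \binom{k}{s}\, g^{k-s} \otimes g^s.$$
Multiplying by $\frac{1}{g^{k+1}} \otimes 1$ collapses the first tensor factor to $\frac{1}{g^{s+1}}$, and passing to $J = (A_g \otimes_K A_g)\otimes_{A_g}\V(U_g)$ identifies $(\frac{1}{g^{s+1}} \otimes g^s)\otimes_{A_g}\eta$ with $\frac{1}{g^{s+1}} \# g^s \eta$. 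Summing the result term by term in $k$ gives the left equality exactly.

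The substance is the right equality, and I would prove it through a geometric-series recursion rather than through coordinates, so that no appeal to an \'etale structure on $U$ is needed. Starting from $1 \otimes g = g \otimes 1 - \delta(g)$ and multiplying the identity $(1 \otimes g)(1 \otimes \frac1g) = 1 \otimes 1$ on the left by $\frac1g \otimes 1$, one obtains the fixed-point relation
$$1 \otimes \tfrac1g = \tfrac1g \otimes 1 + \bigl(\tfrac1g \otimes 1\bigr)\delta(g)\,\bigl(1 \otimes \tfrac1g\bigr)$$
in $A_g \otimes_K A_g$. Iterating this substitution $n+1$ times and using commutativity to rewrite $\bigl((\frac1g\otimes1)\delta(g)\bigr)^k(\frac1g\otimes1) = (\frac{1}{g^{k+1}}\otimes1)\delta(g)^k$, I get
$$1 \otimes \tfrac1g = \sum_{k=0}^n \Bigl(\tfrac{1}{g^{k+1}}\otimes1\Bigr)\delta(g)^k + \Bigl(\tfrac{1}{g^{n+1}}\otimes1\Bigr)\delta(g)^{n+1}\bigl(1\otimes\tfrac1g\bigr).$$
Since the remainder contains the factor $\delta(g)^{n+1} \in \Q^{n+1}$, after applying $\cdot \otimes_{A_g}\eta$ it lands in $\Q^{n+1}\otimes_{A_g}\V(U_g)$, i.e.\ in the $(n+1)$-st defining ideal of $\JJ(U_g)$. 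As $(1\otimes\frac1g)\otimes_{A_g}\eta = 1\#\frac1g\eta$, this says precisely that the partial sums of the right-hand series converge to $1 \# \frac1g\eta$ in $\JJ(U_g)$, which is the desired equality.

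Finally, since $\JJ(U_g)$ maps into $\hAV(U_g) = D(\A,\JJ)(U_g)$, the same identity transports to $\hAV(U_g)$, completing the proof. The one point requiring care — and the main obstacle to a naive argument — is establishing that the series converges to exactly $1 \# \frac1g\eta$ without assuming \'etale coordinates; the recursion above resolves this cleanly. As a consistency check one can verify the formula on an \'etale chart via the jet isomorphism $j$, under which the right-hand series becomes the convergent geometric series $\frac{1}{g(x)}\sum_{k\ge0}(-1)^k\bigl(\frac{g(x+t)-g(x)}{g(x)}\bigr)^k = \frac{1}{g(x+t)} = j(1\otimes\frac1g)$.
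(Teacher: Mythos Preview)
Your proposal is correct and follows essentially the same route as the paper: both show each summand lies in $J_k$ and control the difference between $1\#\tfrac1g\eta$ and the $n$-th partial sum by exhibiting it as a multiple of $\delta(g)^{n+1}$. Your geometric-series recursion makes that factor explicit in the remainder $(\tfrac{1}{g^{n+1}}\otimes1)\delta(g)^{n+1}(1\otimes\tfrac1g)$, whereas the paper simply writes the same remainder in its binomial-expanded form $\sum_{s=0}^{m+1}(-1)^s\binom{m+1}{s}\tfrac{1}{g^{s}}\#g^{s-1}\eta$ and asserts membership in $J_{m+1}$.
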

\begin{proof}
 Here we can immediately see that 
 $$ \left(  \frac{1}{g^{k+1}} \otimes 1 \right) \delta(g)^k \eta \in J_k (U_g),$$
 while
 $$1 \# \frac{1}{g} \eta 
 - \sum_{k=0}^m \sum_{s=0}^k (-1)^s {k \choose s}  \frac{1}{g^{s+1}} \# g^s \eta
 =  \sum_{s=0}^{m+1} (-1)^s {m+1 \choose s}  \frac{1}{g^{s}} \# g^{s-1} \eta$$
 and belongs to $J_{m+1} (U_g)$.
\end{proof}

We saw in Corollary \ref{iso} that Lie algebra $\hLL_+ = 1 \wo \LL_+$ plays an important role in the theory of $AV$-modules. 
As another corollary of Theorem \ref{isojet} we get a construction of a topological vector bundle on $X$ with fibre 
$\hLL_+$. In our next Theorem we derive the change of coordinates formula for this vector bundle. This formula underscores the need for a completion in the setting of sheaves.

Let $U_1$ be an \'etale chart in $X$ with uniformizing parameters $\{ x_1, \ldots, x_N \}$ and let $U_2$ be another  \'etale chart in $X$ with uniformizing parameters $\{y_1, \ldots, y_N \}$.
Suppose in the intersection $U_1 \cap U_2$ we have the following transition functions:
$$ x_i = G_i (y_1, \ldots, y_N) \in \A(U_2), \ \ y_j = H_j (x_1, \ldots, x_N) \in \A(U_1), \ \ 
i,j = 1,\ldots, N.$$
Here we are slightly abusing the notations since $H_i$ and $G_j$ are not polynomials in the respective variables. What we are emphasizing with these notations is the fact that we can take partial derivatives of these functions in the corresponding variables.

\begin{definition}
We call a sheaf $\F$ on $X$ a {\it topological vector bundle} if $\F$ has a decreasing filtration 
 of subsheaves
$$\F \supset \F_1 \supset \F_2 \supset \ldots$$
with $\mathop\cap\limits_m \F_m = 0$, and there exists an open cover of $X$ by affine subsets $\{ U_i \}$,
such that the quotients $\F / \F_m$ have compatible trivializations in each open set $U_i$:
$\F / \F_m (U_i) \cong \O (U_i) \otimes V_{m,i}$, where projections 
$\F / \F_{m+1} (U_i) \rightarrow \F / \F_m (U_i)$ are induced by maps $V_{m+1, i} \rightarrow V_{m,i}$.
\end{definition}
The fibre of a topological vector bundle $\F$ over $U_i$ is $\varprojlim V_{m,i}$. 

\begin{theorem}
\label{bundle}
There exists a topological vector bundle on $X$ with fibre $\hLL_+ = 1 \wo \LL_+$ with the transition functions between two 
\'etale charts $U_1$, $U_2$ given by the formula:
\begin{align}
&X^m \dd{X_p} \mapsto \notag \\ 
\label{Ltransform}
&\hbox{\hskip 1cm}
\sum_{0 \leq k \leq m} (-1)^{m-k} {m \choose k} G(y)^{m-k} \times \\
&\hbox{\hskip 2cm}
\sum_{q=1}^N \left( G(y+Y)^k \frac{\partial H_q}{\partial x_p} (G(y+Y))
- G(y)^k \frac{\partial H_q}{\partial x_p}(G(y)) \right)\dd{Y_q} \notag
\end{align}
and extended to the power series by continuity.
\end{theorem}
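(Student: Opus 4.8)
The plan is to realize the desired bundle \emph{intrinsically}, as a subsheaf of the quasi-coherent Lie algebroid $\JJ$, and then to read off its transition functions from the two local trivializations supplied by Theorem~\ref{isojet}. Concretely, let $\mathcal T = \ker(\JJ \to \V)$ be the kernel of the anchor map. Since $\JJ$ is a Lie algebroid, hence quasi-coherent, and $\V$ is coherent, $\mathcal T$ is the kernel of a morphism of quasi-coherent sheaves and is therefore itself quasi-coherent. Over an \'etale chart $U$ with parameters $x_1,\ldots,x_N$, Theorem~\ref{isojet} identifies $\hJ(U)\cong V\ltimes(A\otimes\hLL_+)$; because $\psi\bigl(g\otimes X^m\dd{X_i}\bigr)=(-1)^{m}(g\otimes 1)\delta(x)^m\dd{x_i}\in\Delta\otimes_A V$ lies in the kernel of the anchor (the multiplication $A\#V\to V$) for $|m|\ge 1$, while $\psi$ maps the $V$-summand isomorphically onto a complement, we obtain $\mathcal T(U)=\psi(A\otimes\hLL_+)\cong A\otimes\hLL_+$. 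As $\varphi$ and $\psi$ are mutually inverse left $A$-module homomorphisms, each $\psi_U$ is an $A$-linear trivialization of $\mathcal T$ over $U$, so $\mathcal T$ is locally free with fibre $\hLL_+$, that is, a vector bundle.

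On an overlap $U_1\cap U_2$, which inherits both sets of uniformizing parameters, the two trivializations $\psi_{U_1}$ and $\psi_{U_2}$ of $\mathcal T$ differ by the automorphism $\varphi_{U_2}\circ\psi_{U_1}$ of $A\otimes\hLL_+$. This map is left $A$-linear, hence determined by its values on the fibre generators $1\otimes X^m\dd{X_p}$, and these values are exactly the transition functions. The cocycle condition is automatic, since all trivializations arise from the single sheaf $\mathcal T$ via the relations $\psi_{U_i}\varphi_{U_i}=\id$. It therefore remains only to compute $\varphi_{U_2}\bigl(\psi_{U_1}(1\otimes X^m\dd{X_p})\bigr)$ and to match it with formula~(\ref{Ltransform}).

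To carry out this computation I would substitute the transition functions $x_i=G_i(y)$ into $\psi_{U_1}(1\otimes X^m\dd{X_p})=(-1)^{m}\delta(x)^m\dd{x_p}$, writing $\delta(x_i)=G_i(y)\otimes 1-1\otimes G_i(y)$ and expanding binomially. The binomial expansion separates the two tensor slots of $A\otimes_K A$ into a base-point factor, which becomes a left $A$-coefficient evaluated at $y$, and an expansion factor, which under $\varphi_{U_2}$ is Taylor-expanded, i.e.\ evaluated at $y+Y$. Rewriting $\dd{x_p}=\sum_q \frac{\partial H_q}{\partial x_p}\dd{y_q}$ by the chain rule places the Jacobian entry $\frac{\partial H_q}{\partial x_p}=\frac{\partial H_q}{\partial x_p}(G(y))$ into the expansion slot. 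Applying the identity $\varphi_{U_2}\bigl(g\otimes f\dd{y_q}\bigr)=gf\dd{y_q}+g\bigl(f(y+Y)-f(y)\bigr)\dd{Y_q}$ from the proof of Theorem~\ref{isojet} then replaces each expansion-slot function $f$ by $f(y+Y)-f(y)$ attached to $\dd{Y_q}$, while the anchor terms proportional to $\dd{y_q}$ cancel precisely because $\psi_{U_1}(1\otimes X^m\dd{X_p})$ lies in $\mathcal T=\ker(\text{anchor})$. Collecting the surviving terms and re-indexing the binomial sum by $k\mapsto m-k$, so that $G(y)^{m-k}$ moves into the coefficient and $G(y)^{k}$ into the expansion factor, reproduces formula~(\ref{Ltransform}), with the sign $(-1)^{m-k}$ emerging as $(-1)^{m}$ times the binomial sign after re-indexing.

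The conceptual content, namely quasi-coherence, local freeness, and the cocycle condition, is immediate once $\mathcal T$ is identified with $\ker(\text{anchor})$ and Theorem~\ref{isojet} is invoked. The main obstacle is thus purely computational bookkeeping: correctly tracking which of the two factors of $A\otimes_K A$ each function occupies at every stage, since this dictates whether it is ultimately evaluated at the base point $y$ or at the expansion $y+Y$, and keeping the binomial coefficients and signs consistent through the chain-rule substitution and the application of $\varphi_{U_2}$. The cancellation of the $\dd{y_q}$ terms is the one genuinely informative check that the output lands in the fibre $\hLL_+$ rather than in all of $\hJ$; although this is guaranteed a priori by the kernel description, I would verify that it is manifest in the explicit expansion before declaring the match with~(\ref{Ltransform}).
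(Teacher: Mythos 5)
Your proposal is correct and takes essentially the paper's own (largely omitted) route: the paper states that the transition functions are obtained by "passing from $\hLL_+$ to $\JJ(U_1)$, performing the transition to $\JJ(U_2)$ and returning to $\A(U_2)\otimes\hLL_+$," which is exactly your computation of $\varphi_{U_2}\circ\psi_{U_1}$ on the generators $X^m\dd{X_p}$, and your binomial expansion of $\delta(G(y))^m$ with the chain rule does reproduce (\ref{Ltransform}), the $\dd{y_q}$-terms cancelling via $\sum_{0\le k\le m}(-1)^{m-k}{m\choose k}=0$. Your identification of the bundle with $\ker(\JJ\to\V)$ only makes explicit what the paper leaves implicit (and is what gives the cocycle condition for free), so this is the same argument with the details the paper declines to write out actually supplied.
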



This formula is obtained by passing from $\hLL_+$ to $\JJ(U_1)$, performing the transition to 
$\JJ(U_2)$ and returning to $\A(U_2) \wo \LL_+$. We omit the details of this straightforward calculation.

It is easy to see that in (\ref{Ltransform}) the terms with $Y^s$ where $|s| < |m|$, vanish. This means that this infinite-dimensional vector bundle has a filtration of subbundles corresponding to spaces $\hLL_{\geq k}$, which induce finite-dimensional quotient bundles.  
The quotient bundle corresponding to $\hLL_+ / \hLL_{k \geq 1}$ is the vector bundle of 
$(1,1)$-tensors $\Omega^1 \otimes_{\A} \V$.
 



\bibliographystyle{alpha}
\bibliography{bibs}

@book {AM,
    AUTHOR = {Atiyah, M. F. and Macdonald, I. G.},
     TITLE = {Introduction to commutative algebra},
 PUBLISHER = {Addison-Wesley Publishing Co., Reading, Mass.-London-Don
              Mills, Ont.},
      YEAR = {1969},
     PAGES = {ix+128},
   MRCLASS = {13.00},
  MRNUMBER = {0242802},
MRREVIEWER = {J. A. Johnson},
}

@article {Ba,
    AUTHOR = {Bavula, V. V.},
     TITLE = {Generators and defining relations for the ring of differential
              operators on a smooth affine algebraic variety},
   JOURNAL = {Algebr. Represent. Theory},
  FJOURNAL = {Algebras and Representation Theory},
    VOLUME = {13},
      YEAR = {2010},
    NUMBER = {2},
     PAGES = {159--187},
      ISSN = {1386-923X},
   MRCLASS = {13N10 (16S32)},
  MRNUMBER = {2601539},
MRREVIEWER = {Rouchdi Bahloul},
       DOI = {10.1007/s10468-008-9112-7},
       URL = {https://doi-org.proxy.library.carleton.ca/10.1007/s10468-008-9112-7},
}

@article {BF0,
    AUTHOR = {Billig, Yuly and Futorny, Vyacheslav},
     TITLE = {Classification of irreducible representations of {L}ie algebra
              of vector fields on a torus},
   JOURNAL = {J. Reine Angew. Math.},
  FJOURNAL = {Journal f\"{u}r die Reine und Angewandte Mathematik. [Crelle's
              Journal]},
    VOLUME = {720},
      YEAR = {2016},
     PAGES = {199--216},
      ISSN = {0075-4102},
   MRCLASS = {17B66 (17B10)},
  MRNUMBER = {3565973},
MRREVIEWER = {Henan Wu},
       DOI = {10.1515/crelle-2014-0059},
       URL = {https://doi-org.proxy.library.carleton.ca/10.1515/crelle-2014-0059},
}

@article {BF,
    AUTHOR = {Billig, Yuly and Futorny, Vyacheslav},
     TITLE = {Lie algebras of vector fields on smooth affine varieties},
   JOURNAL = {Comm. Algebra},
  FJOURNAL = {Communications in Algebra},
    VOLUME = {46},
      YEAR = {2018},
    NUMBER = {8},
     PAGES = {3413--3429},
      ISSN = {0092-7872},
   MRCLASS = {17B20 (13N15 17B66)},
  MRNUMBER = {3789004},
MRREVIEWER = {Jonathan D. Axtell},
       DOI = {10.1080/00927872.2017.1412456},
       URL = {https://doi-org.proxy.library.carleton.ca/10.1080/00927872.2017.1412456},
}

@article {BFN,
    AUTHOR = {Billig, Yuly and Futorny, Vyacheslav and Nilsson, Jonathan},
     TITLE = {Representations of {L}ie algebras of vector fields on affine
              varieties},
   JOURNAL = {Israel J. Math.},
  FJOURNAL = {Israel Journal of Mathematics},
    VOLUME = {233},
      YEAR = {2019},
    NUMBER = {1},
     PAGES = {379--399},
      ISSN = {0021-2172},
   MRCLASS = {17B66 (14R10 17B10)},
  MRNUMBER = {4013979},
MRREVIEWER = {Yu-Feng Yao},
       DOI = {10.1007/s11856-019-1909-z},
       URL = {https://doi-org.proxy.library.carleton.ca/10.1007/s11856-019-1909-z},
}

@article{BIN,
  title={{A}{V} modules of finite type on affine space},
  author={Billig, Yuly and Ingalls, Colin and Nasr, Amir},
  journal={Journal of Algebra},
  volume={623},
  pages={481--495},
  year={2023},
  publisher={Elsevier}
}

@article{BR,
  title={Annihilators of {A}{V}-modules and differential operators},
  author={Bouaziz, Emile and Rocha, Henrique},
  journal={Journal of Algebra},
  volume={636},
  pages={869--887},
  year={2023},
  publisher={Elsevier}
}

@article {GS,
    AUTHOR = {Grantcharov, Dimitar and Serganova, Vera},
     TITLE = {Simple weight modules with finite weight multiplicities over
              the {L}ie algebra of polynomial vector fields},
   JOURNAL = {J. Reine Angew. Math.},
  FJOURNAL = {Journal f\"{u}r die Reine und Angewandte Mathematik. [Crelle's
              Journal]},
    VOLUME = {792},
      YEAR = {2022},
     PAGES = {93--114},
      ISSN = {0075-4102},
   MRCLASS = {17B66 (17B10)},
  MRNUMBER = {4504093},
       DOI = {10.1515/crelle-2022-0053},
       URL = {https://doi-org.proxy.library.carleton.ca/10.1515/crelle-2022-0053},
}

@article {J1,
    AUTHOR = {Jordan, D. A.},
     TITLE = {On the ideals of a {L}ie algebra of derivations},
   JOURNAL = {J. London Math. Soc. (2)},
  FJOURNAL = {Journal of the London Mathematical Society. Second Series},
    VOLUME = {33},
      YEAR = {1986},
    NUMBER = {1},
     PAGES = {33--39},
      ISSN = {0024-6107},
   MRCLASS = {17B60 (13B10)},
  MRNUMBER = {829385},
MRREVIEWER = {B. N. Allison},
       DOI = {10.1112/jlms/s2-33.1.33},
       URL = {https://doi-org.proxy.library.carleton.ca/10.1112/jlms/s2-33.1.33},
}

@article {J2,
    AUTHOR = {Jordan, David A.},
     TITLE = {On the simplicity of {L}ie algebras of derivations of
              commutative algebras},
   JOURNAL = {J. Algebra},
  FJOURNAL = {Journal of Algebra},
    VOLUME = {228},
      YEAR = {2000},
    NUMBER = {2},
     PAGES = {580--585},
      ISSN = {0021-8693},
   MRCLASS = {16W25},
  MRNUMBER = {1764580},
MRREVIEWER = {Santos Gonz\'{a}lez},
       DOI = {10.1006/jabr.2000.8286},
       URL = {https://doi-org.proxy.library.carleton.ca/10.1006/jabr.2000.8286},
}

@book {Kempf,
    AUTHOR = {Kempf, George R.},
     TITLE = {Algebraic varieties},
    SERIES = {London Mathematical Society Lecture Note Series},
    VOLUME = {172},
 PUBLISHER = {Cambridge University Press, Cambridge},
      YEAR = {1993},
     PAGES = {x+163},
      ISBN = {0-521-42613-8},
   MRCLASS = {14-01},
  MRNUMBER = {1252397},
MRREVIEWER = {Gerhard Pfister},
       DOI = {10.1017/CBO9781107359956},
       URL = {https://doi-org.proxy.library.carleton.ca/10.1017/CBO9781107359956},
}

@book {Matsumura,
    AUTHOR = {Matsumura, Hideyuki},
     TITLE = {Commutative ring theory},
    SERIES = {Cambridge Studies in Advanced Mathematics},
    VOLUME = {8},
   EDITION = {Second},
 PUBLISHER = {Cambridge University Press, Cambridge},
      YEAR = {1989},
     PAGES = {xiv+320},
      ISBN = {0-521-36764-6},
   MRCLASS = {13-01},
  MRNUMBER = {1011461},
}

@article {MM,
    AUTHOR = {Moerdijk, I. and Mr\v{c}un, J.},
     TITLE = {On the universal enveloping algebra of a {L}ie algebroid},
   JOURNAL = {Proc. Amer. Math. Soc.},
  FJOURNAL = {Proceedings of the American Mathematical Society},
    VOLUME = {138},
      YEAR = {2010},
    NUMBER = {9},
     PAGES = {3135--3145},
      ISSN = {0002-9939},
   MRCLASS = {17B35 (16T10 16T15)},
  MRNUMBER = {2653938},
MRREVIEWER = {Olga Kravchenko},
       DOI = {10.1090/S0002-9939-10-10347-5},
       URL = {https://doi-org.proxy.library.carleton.ca/10.1090/S0002-9939-10-10347-5},
}

@book {Mu,
    AUTHOR = {Mumford, David},
     TITLE = {The red book of varieties and schemes},
    SERIES = {Lecture Notes in Mathematics},
    VOLUME = {1358},
 PUBLISHER = {Springer-Verlag, Berlin},
      YEAR = {1999},
     PAGES = {x+306},
      ISBN = {3-540-63293-X},
   MRCLASS = {14-01 (14A15 14H40 14H42)},
  MRNUMBER = {1748380},
MRREVIEWER = {Arnaud Beauville},
       DOI = {10.1007/b62130},
       URL = {https://doi-org.proxy.library.carleton.ca/10.1007/b62130},
}

@article {Ri,
    AUTHOR = {Rinehart, George S.},
     TITLE = {Differential forms on general commutative algebras},
   JOURNAL = {Trans. Amer. Math. Soc.},
  FJOURNAL = {Transactions of the American Mathematical Society},
    VOLUME = {108},
      YEAR = {1963},
     PAGES = {195--222},
      ISSN = {0002-9947},
   MRCLASS = {17.30 (14.52)},
  MRNUMBER = {154906},
MRREVIEWER = {P. Cartier},
       DOI = {10.2307/1993603},
       URL = {https://doi-org.proxy.library.carleton.ca/10.2307/1993603},
}

@article{XL,
  title={Classification of simple bounded weight modules of the {L}ie algebra of vector fields on ${C}^n$},
  author={Xue, Yaohui and L{\"u}, Rencai},
  journal={Israel Journal of Mathematics},
  volume={253},
  number={1},
  pages={445--468},
  year={2023},
  publisher={Springer}
}

@incollection {BB,
    AUTHOR = {Be\u{\i}linson, A. and Bernstein, J.},
     TITLE = {A proof of {J}antzen conjectures},
 BOOKTITLE = {I. {M}. {G}el\cprime fand {S}eminar},
    SERIES = {Adv. Soviet Math.},
    VOLUME = {16},
     PAGES = {1--50},
 PUBLISHER = {Amer. Math. Soc., Providence, RI},
      YEAR = {1993},
   MRCLASS = {22E47 (14A22 14F05)},
  MRNUMBER = {1237825},
MRREVIEWER = {D. Mili\v{c}i\'{c}},
}

@misc{Vakil,
  author = {Vakil, Ravi},
  title = {{A Beginner’s Guide to Jet Bundles from the Point of View of
Algebraic Geometry}},
  howpublished = "\url{http://math.stanford.edu/~vakil/files/jets.pdf}",
  year = {1998}, 
  note = "[Online; accessed 1-Feburary-2023]"
}

\end{document}